\newtheorem{theorem}{Theorem}
\newtheorem{proposition}[theorem]{Proposition}
\newtheorem{lemma}[theorem]{Lemma}
\theoremstyle{remark}
\newtheorem{example}[theorem]{Example}
\def\gcd{{\mathrm{ gcd }}}
\def\msg{{\mathrm{ msg }}}
\def\Ap{{\mathrm{ Ap }}}
\def\N{\mathbb{N}}
\def\int{\mathrm{int}}
\title{Semigroups with fixed multiplicity and embedding dimension}
\author{
J. I. Garc\'{\i}a-Garc\'{\i}a
\footnote{
	Dpto. de Matem\'aticas/INDESS (Instituto Universitario para el Desarrollo Social Sostenible).
	Universidad de C\'adiz, E-11510 Puerto Real  (C\'{a}diz, Spain).
	E-mail: ignacio.garcia@uca.es.
	Partially supported by MTM2014-55367-P and by Junta de Andaluc\'{\i}a group FQM-366.
}
\and
D. Mar\'{\i}n-Arag\'{o}n
\footnote{
	E-mail: daniel.marinaragon@alum.uca.es.
	Partially supported by Junta de Andaluc\'{\i}a group FQM-366.
}
\and
M. A. Moreno-Fr\'{\i}as
\footnote{
	Dpto. Matem\'aticas,
	Universidad de C\'adiz, E-11510 Puerto Real  (C\'{a}diz, Spain).
	E-mail: mariangeles.moreno@uca.es.
	Partially supported by MTM2014-55367-P and by Junta de Andaluc\'{\i}a group FQM-298.
}
\and
J. C. Rosales
\footnote{
	Dpto. de \'Algebra, Universidad de Granada.
	Partially supported by MTM2014-55367-P and by Junta de Andaluc\'{\i}a group FQM-343.
	E-mail: jrosales@ugr.es.
}
\and
A. Vigneron-Tenorio
\footnote{
	Dpto. de Matem\'aticas/INDESS (Instituto Universitario para el Desarrollo Social Sostenible).
	Universidad de C\'adiz,
 	E-11406 Jerez de la Frontera (C\'{a}diz, Spain).
 	E-mail: alberto.vigneron@uca.es.
 	Partially supported by MTM2015-65764-C3-1-P (MINECO/FEDER, UE) and
	Junta de Andaluc\'{\i}a group FQM-366. 	
}
}
\date{}
\begin{document}

\maketitle

\begin{abstract}
Given $m\in \N,$ a numerical semigroup with multiplicity $m$ is called packed numerical  semigroup if its minimal generating set is included in $\{m,m+1,\ldots, 2m-1\}.$
In this work, packed numerical  semigroups are used to built the set of numerical semigroups with fixed multiplicity and embedding dimension, and to create a partition in this set. Moreover,
Wilf's conjecture is checked in the tree associated to some packed numerical semigroups.

\smallskip
	{\small \emph{Keywords:} embedding dimension, Frobenius number, genus, multiplicity, numerical semigroup.}

	\smallskip
	{\small \emph{MSC-class:} 20M14 (Primary),  20M05 (Secondary).}
\end{abstract}

\section{Introduction}

Let $\N=\{0,1,2,\ldots\}$ be the set of non-negative integers. A numerical semigroup is a subset $S$ of $\N$ which is closed by sum, $0\in S$ and $\N\backslash S$ is finite. We call multiplicity to the least positive integer in $S$ and we denote it by $m(S)$.

Given a non-empty subset $A$ of $\N$ we denote by $\langle A \rangle$ to the submonoid of $(\N,+)$ generated by $A$, that is, $\langle A \rangle = \{\lambda_1 a_1+\cdots+\lambda_n a_n \mid n\in\N$, $a_1,\ldots,a_n\in A,~\lambda_1,\ldots,\lambda_n\in\N\}$. It is well known (for example, see Lemma 2.1 from \cite{libro}) that $\langle A \rangle$ is a numerical semigroup if and only if $\gcd (A)=1$.

If $S$ is a numerical semigroup and $S=\langle A \rangle,$ we say that $A$ is a system of generators of $S$.
Moreover, $A$ is a minimal system of generators of $S$ if $S\neq\langle B \rangle$ for every $B\subsetneq A$.
In Theorem 2.7 from \cite{libro} it is shown that every numerical semigroup has a unique minimal system of generator and this system is finite. We denote by $\msg(S)$ and $e(S)$ the minimal system of generators of $S$ and its cardinality, also called the embedding dimension of $S$.

In this work, our main aim is to show a procedure that allows us to build recursively the set $\mathcal{L}(m,e)$ formed by all the numerical semigroup with multiplicity $m$ and embedding dimension $e$.

We say that a numerical semigroup $S$ is a packed numerical semigroup if $\msg(S)\subseteq\{m(S),m(S)+1,\ldots,2m(S)-1\}$. The set of all packed numerical semigroups with multiplicity $m$ and embedding dimension $e$ is denoted by $\mathcal{C}(m,e).$

In Section \ref{section_partition}, an equivalence relation $\mathcal{R}$ in the set $\mathcal{L}(m,e)$ is defined. We show that if $S\in\mathcal{L}(m,e)$ then $[S]\cap\mathcal{C}(m,e)$ has cardinality 1, so $\{[S]\mid S\in\mathcal{C}(m,e)\}$ is a partition if $\mathcal{L}(m,e)$.
Hence, for computing all the elements of the set $\mathcal{L}(m,e)$ is only necessary do the following steps:
\begin{enumerate}
    \item Compute $\mathcal{C}(m,e).$
    \item For every $S\in\mathcal{C}(m,e)$ compute $[S]$.
\end{enumerate}

We see that it is easy to compute $\mathcal{C}(m,e)$ because, actually, this problem is equivalent to compute all the subsets $A$ of $\{1,2,\ldots,m-1\}$ such that $A$ has cardinality $e-1$ and $\gcd(A\cup\{m\})=1$. For computing $[S]$ we order its elements making a tree with root in $S,$ and see how the children of the vertices are.

In this way, we can build recursively the elements of $[S]$ adding in each step the children of the vertices we got in the previous step. This procedure is not algorithmic because $[S]$ is infinite and we can not build it in a finite number of steps.

If $S$ is a numerical semigroup, we call Frobenius number (respectively, genus) of $S$ the greater integer which is not in $S$ (respectively, the cardinality of $\N\backslash S$) and we denote it by $F(S)$ (respectively, $g(S)$). These invariants have been widely studied (see \cite{alfonsin}) and they, together with the embedding dimension, are the background of one the most important problems in this theory: Wilf's conjecture which stablishes that
if $S$ is a numerical semigroup then $e(S)g(S)\leq(e(S)-1)(F(S)+1)$ (see \cite{wilf}).
Nowadays, it is still open.

In this work, we show that if we go along through the branches of the tree associated to $[S]$, the numerical semigroups have a greater Frobenius number and genus. These facts allow us to give an algorithm for building all the elements of $\mathcal{L}(m,e)$ with a fixed Frobenius number and/or genus.
Finally, in order to compute the Frobenius number and the genus of the numerical semigroups of $[S]$, we give an algorithm based in \cite{contejean}.

The content of this work is organized as follows.
In Section \ref{section_partition}, a partition of the set ${\mathcal L}(m,e)$ is studied and we establish an application $\phi:{\mathcal L}(m,e)\to {\mathcal C}(m,e)$ such that $[S]\cap {\mathcal C}(m,e)$ is equal to $\{\phi(S)\}$ for every $S\in{\mathcal L}(m,e)$.
Theorem \ref{Teorema9}, in Section \ref{S3}, allows us to compute recursively the elements of $[S]$.
In Section \ref{S4}, we give some algorithms for computing the elements of $[S]$ with Frobenius number and/or genus less than fixed integer numbers.
Finally, in Section \ref{S5}, we show how the Apery set of the elements of $[S]$ allows us to compute easily their Frobenius number and genus. Besides, we also check that Wilf's conjecture is satisfied for some elements of $[S]$.

\section{A partition of $\mathcal{L}(m,e)$}\label{section_partition}

If $A$ and $B$ are subsets of $\N$ we denote by $A+B=\{a+b\mid a\in A\mbox{\ and\ } b\in B\}$.

It is well known (for example see Proposition 2.10 from \cite{libro}) that if $S$ is a numerical semigroup then $e(S)\leq m(S)$. Note that if $e(S)=1$ then $S=\N$. Therefore, in the sequel, we assume that $e$ and $m$ are integers such that $2\leq e\leq m$.

Given $S\in\mathcal{L}(m,e)$ we denote by $\phi(S)$ the numerical semigroup generated by $\{m\}+\{x\mod m\mid x\in \msg(S)\}$. Clearly, $\phi(S)$ is a packed numerical semigroup and therefore we have the following result.

\begin{lemma}\label{Lema1}
With the previous assumptions, $\phi$ defines a surjective map from $\mathcal{L}(m,e)$ to $\mathcal{C}(m,e)$.
\end{lemma}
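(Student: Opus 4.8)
The plan is to verify that $\phi$ is well-defined with image contained in $\mathcal{C}(m,e)$, and then that it is surjective. Throughout, write $\msg(S)=\{n_1,\ldots,n_e\}$ with $n_1=m(S)=m$, and set $a_i=m+(n_i\bmod m)$, so that $\phi(S)=\langle a_1,\ldots,a_e\rangle$ with each $a_i\in\{m,m+1,\ldots,2m-1\}$ and $a_1=m$.

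First I would confirm that $\phi(S)$ is genuinely a numerical semigroup, i.e.\ that $\gcd\{a_1,\ldots,a_e\}=1$. Since $a_i\equiv n_i\pmod m$ and $m\in\{a_1,\ldots,a_e\}$, an integer $d$ divides every $a_i$ if and only if $d\mid m$ and $d\mid n_i$ for all $i$; hence the common divisors of $\{a_1,\ldots,a_e\}$ coincide with those of $\msg(S)$, and therefore $\gcd\{a_1,\ldots,a_e\}=\gcd(\msg(S))=1$ by Lemma 2.1 of \cite{libro}. By construction each $a_i$ lies in $\{m,\ldots,2m-1\}$, so $\phi(S)$ is packed.

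Next comes the image computation. For the multiplicity, all generators are $\ge m$ and $a_1=m$ is one of them, so $m(\phi(S))=m$. The embedding dimension is the delicate point, and I expect it to be the main obstacle, since one must rule out collisions among residues before counting. Two observations drive it. First, the minimal generators of any numerical semigroup are pairwise incongruent modulo its multiplicity: if $n_i\equiv n_j\pmod m$ with $n_i<n_j$, then $n_j=n_i+km\in\langle m,n_i\rangle$, contradicting minimality. Consequently $n_1\bmod m=0,\ n_2\bmod m,\ \ldots,\ n_e\bmod m$ are $e$ distinct residues, so $a_1,\ldots,a_e$ are $e$ distinct elements of $\{m,\ldots,2m-1\}$. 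Second, in a packed semigroup no element of $\{m,\ldots,2m-1\}$ can be written as a nontrivial combination of generators, since any such combination is $\ge m+m=2m$. Hence each $a_i$ is a minimal generator, $\{a_1,\ldots,a_e\}$ is the minimal system of generators, and $e(\phi(S))=e$. This yields $\phi(S)\in\mathcal{C}(m,e)$.

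Finally, surjectivity follows by showing that $\phi$ fixes $\mathcal{C}(m,e)$ pointwise. If $T\in\mathcal{C}(m,e)$, then $\msg(T)=\{m,m+r_2,\ldots,m+r_e\}$ with $r_i\in\{1,\ldots,m-1\}$; since $m+r_i<2m$ we have $(m+r_i)\bmod m=r_i$, so $m+((m+r_i)\bmod m)=m+r_i$, and likewise $m+(m\bmod m)=m$. Thus $\phi(T)$ is generated by $\msg(T)$, whence $\phi(T)=T$. In particular every element of $\mathcal{C}(m,e)$ lies in the image of $\phi$, so $\phi$ is surjective.
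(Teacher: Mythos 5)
Your proof is correct and takes essentially the same route as the paper, which states the lemma as clear and relies on exactly the facts you verify (the packed-generator minimality argument reappears in the paper's proof of Proposition \ref{Proposition4}, and surjectivity via $\phi(\phi(S))=\phi(S)$ is invoked in Lemma \ref{Lema2}). Your writeup simply supplies the details the paper omits, most usefully the pairwise incongruence of minimal generators modulo $m$ and the observation that any sum of two packed generators is at least $2m$, which together give $e(\phi(S))=e$.
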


We define in $\mathcal{L}(m,e)$ the following equivalence relation: $S\mathcal{R}T$ if $\phi(S)=\phi(T)$. Given $S\in\mathcal{L}(m,e),$ $[S]$ denotes the set $\{T\in\mathcal{L}(m,e)\mid S\mathcal{R}T\}$. Therefore, the quotient set $\displaystyle{{\mathcal{L}(m,e)}/{\mathcal{R}}}=\{[S]\mid S\in\mathcal{L}(m,e)\}$ is a partition of $\mathcal{L}(m,e)$.

\begin{lemma}\label{Lema2}
If $S\in\mathcal{L}(m,e)$, then $[S]\cap\mathcal{C}(m,e)=\{\phi(S)\}$.
\end{lemma}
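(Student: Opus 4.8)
The plan is to reduce the whole statement to a single observation: the map $\phi$ restricts to the identity on the set of packed semigroups $\mathcal{C}(m,e)$. Once this is established, together with Lemma \ref{Lema1} it will immediately yield both the membership $\phi(S)\in[S]\cap\mathcal{C}(m,e)$ and the uniqueness. First I would record that $\phi(S)\in\mathcal{C}(m,e)$, which is exactly the content of Lemma \ref{Lema1}; so the one substantive step is to show that $\phi$ fixes every packed semigroup.

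To prove $\phi(P)=P$ for $P\in\mathcal{C}(m,e)$, I would argue as follows. By definition of a packed semigroup, $\msg(P)\subseteq\{m,m+1,\ldots,2m-1\}$, so for each $x\in\msg(P)$ we have $m\le x<2m$ and hence $x\bmod m=x-m$. Consequently $m+(x\bmod m)=x$, and therefore the generating set $\{m\}+\{x\bmod m\mid x\in\msg(P)\}$ that defines $\phi(P)$ coincides elementwise with $\msg(P)$. Thus $\phi(P)=\langle\msg(P)\rangle=P$. The only point requiring care here is that this set reduces literally to $\msg(P)$ and not merely to some other generating set of $P$; the residue computation makes this exact.

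Next I would verify the inclusion $\supseteq$. Since $\phi(S)$ is packed (Lemma \ref{Lema1}) and $\mathcal{C}(m,e)\subseteq\mathcal{L}(m,e)$, the previous step applied to $P=\phi(S)$ gives $\phi(\phi(S))=\phi(S)$, which is precisely the relation $S\,\mathcal{R}\,\phi(S)$. Hence $\phi(S)\in[S]$, and combined with $\phi(S)\in\mathcal{C}(m,e)$ we obtain $\phi(S)\in[S]\cap\mathcal{C}(m,e)$. For the reverse inclusion, suppose $P\in[S]\cap\mathcal{C}(m,e)$. The condition $P\in[S]$ means $\phi(P)=\phi(S)$, while $P\in\mathcal{C}(m,e)$ together with the key step gives $\phi(P)=P$; chaining these yields $P=\phi(P)=\phi(S)$. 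Therefore the intersection contains no element other than $\phi(S)$, and $[S]\cap\mathcal{C}(m,e)=\{\phi(S)\}$.

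I do not anticipate a serious obstacle: the entire argument rests on the elementary residue identity $m+(x\bmod m)=x$ valid on $\{m,\ldots,2m-1\}$, which shows $\phi$ is idempotent and in fact the identity on packed semigroups. Everything else is bookkeeping with the equivalence relation $\mathcal{R}$ and an appeal to Lemma \ref{Lema1}. If anything deserves extra attention, it is the step above confirming that the defining generating set of $\phi(P)$ is exactly $\msg(P)$, since that is what forces $\phi(P)=P$ rather than just $\phi(P)\supseteq P$ or a coincidence of generated semigroups.
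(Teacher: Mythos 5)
Your proposal is correct and follows essentially the same route as the paper: establish $\phi(S)\in\mathcal{C}(m,e)$ via Lemma \ref{Lema1}, observe that $\phi$ fixes packed semigroups (hence $\phi(\phi(S))=\phi(S)$, giving $\phi(S)\in[S]$), and then use $\phi(T)=T$ together with $\phi(T)=\phi(S)$ to pin down any $T\in[S]\cap\mathcal{C}(m,e)$. The only difference is that you spell out, via the residue identity $m+(x\bmod m)=x$ on $\{m,\ldots,2m-1\}$, the idempotence step that the paper dismisses as ``clear,'' which is a reasonable addition rather than a deviation.
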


\begin{proof}
By Lemma \ref{Lema1}, we know that $\phi(S)\in\mathcal{C}(m,e)$. Moreover, it is clear that $\phi(\phi(S))=\phi(S)$. Therefore, $S\mathcal{R}\phi(S)$ and $\phi(S)\in[S]\cap\mathcal{C}(m,e)$.

If $T\in [S]\cap\mathcal{C}(m,e)$, then $\phi(T)=\phi(S)$ and $\phi(T)=T$, so $T=\phi(S)$.
\end{proof}

The following result is a consequence of the previous lemmas.

\begin{theorem}\label{Teorema3}
Let $m$ and $e$ be integers such that $2\leq e\leq m$. Then $\{[S]\mid S\in\mathcal{C}(m,e)\}$ is a partition of $\mathcal{L}(m,e)$. Moreover, if $\{S,T\}\subseteq\mathcal{C}(m,e)$ and $S\neq T$ then $[S]\cap[T]=\emptyset$.
\end{theorem}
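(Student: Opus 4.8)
The plan is to deduce both claims directly from Lemmas \ref{Lema1} and \ref{Lema2}, together with the already-noted fact that the equivalence classes of $\mathcal{R}$ form a partition of $\mathcal{L}(m,e)$. The heart of the matter is that $\mathcal{C}(m,e)$ is a full set of \emph{distinct} representatives for $\mathcal{R}$: every $\mathcal{R}$-class meets $\mathcal{C}(m,e)$, and it does so in exactly one point. Once this is established, both parts of the theorem are immediate.

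First I would verify that every block of the quotient $\mathcal{L}(m,e)/\mathcal{R}$ admits a representative lying in $\mathcal{C}(m,e)$. Take any $S\in\mathcal{L}(m,e)$. By Lemma \ref{Lema1}, $\phi(S)\in\mathcal{C}(m,e)$, and since $\phi(\phi(S))=\phi(S)$ we have $S\,\mathcal{R}\,\phi(S)$, whence $[S]=[\phi(S)]$. Because $\mathcal{C}(m,e)\subseteq\mathcal{L}(m,e)$, this yields
\[
\{[S]\mid S\in\mathcal{C}(m,e)\}=\{[S]\mid S\in\mathcal{L}(m,e)\}=\mathcal{L}(m,e)/\mathcal{R}.
\]
As the quotient set is a partition of $\mathcal{L}(m,e)$, the first assertion follows at once.

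For the second assertion I would use the defining property of equivalence classes: two $\mathcal{R}$-classes are either disjoint or equal. Suppose $S,T\in\mathcal{C}(m,e)$ with $[S]\cap[T]\neq\emptyset$; then $[S]=[T]$, so $T\in[S]\cap\mathcal{C}(m,e)$. By Lemma \ref{Lema2} we have $[S]\cap\mathcal{C}(m,e)=\{\phi(S)\}$, and since $S$ is packed, $\phi(S)=S$; hence $T=S$. Taking the contrapositive gives $[S]\cap[T]=\emptyset$ whenever $S\neq T$, which is exactly the claim.

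Since the whole statement is essentially bookkeeping built on the two lemmas, there is no serious obstacle. The only point deserving a word of care is the identity $\phi(S)=S$ for $S\in\mathcal{C}(m,e)$, which is needed to turn $\phi(S)$ into the canonical representative $S$ itself. This is immediate from the definition of $\phi$: if $\msg(S)\subseteq\{m,m+1,\ldots,2m-1\}$, then $m+(x\bmod m)=x$ for every generator $x$, so $\{m\}+\{x\bmod m\mid x\in\msg(S)\}=\msg(S)$ and therefore $\phi(S)=\langle\msg(S)\rangle=S$.
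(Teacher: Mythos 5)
Your proposal is correct and follows exactly the route the paper intends: the paper states Theorem \ref{Teorema3} as an immediate consequence of Lemmas \ref{Lema1} and \ref{Lema2}, and you have simply supplied the bookkeeping (including the verification that $\phi(S)=S$ for packed $S$, which the paper's proof of Lemma \ref{Lema2} also uses implicitly). Nothing in your argument deviates from or goes beyond what the paper's lemmas already provide.
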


Therefore, as a consequence of Theorem \ref{Teorema3}, for computing all the elements of the set $\mathcal{L}(m,e)$ is only necessary to do the following steps:
\begin{enumerate}
    \item Compute $\mathcal{C}(m,e)$.
    \item For every $S\in\mathcal{C}(m,e)$ compute $[S]$.
\end{enumerate}

$\mathcal{C}(m,e)$ is easy to compute using the following result.

\begin{proposition}\label{Proposition4}
Let $m$ and $e$ be integers such that $2\leq e\leq m,$ and let $A$ be a subset of $\{1,\ldots,m-1\}$ with cardinality $e-1$ such that $\gcd(A\cup\{m\})=1$. Then $S=\langle \{m\}+(A\cup\{0\}) \rangle\in\mathcal{C}(m,e)$. Moreover, every element of $\mathcal{C}(m,e)$ has this form.
\end{proposition}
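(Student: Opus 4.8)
The plan is to prove the two assertions separately: first that any $S$ of the stated form lies in $\mathcal{C}(m,e)$, and then that every member of $\mathcal{C}(m,e)$ admits such a description. Throughout I write $A=\{a_1,\ldots,a_{e-1}\}$ with the $a_i$ distinct elements of $\{1,\ldots,m-1\}$, so that the proposed generating set is $G=\{m,m+a_1,\ldots,m+a_{e-1}\}\subseteq\{m,m+1,\ldots,2m-1\}$, a set of exactly $e$ distinct positive integers (distinct because the $m+a_i$ are pairwise distinct and all exceed $m$).

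For the forward direction, I would first check that $S=\langle G\rangle$ is a numerical semigroup. The key identity is $\gcd(G)=\gcd(m,a_1,\ldots,a_{e-1})=\gcd(A\cup\{m\})$, obtained by subtracting the generator $m$ from each of the remaining generators; since the right-hand side equals $1$ by hypothesis, $\langle G\rangle$ is a numerical semigroup. Next, since every generator is at least $m$, every nonzero element of $S$ is at least $m$, while $m\in S$; hence $m(S)=m$.

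The heart of the argument is identifying $\msg(S)$. Here I would exploit the $2m$-threshold: any element of $S$ that is a sum of two or more generators is at least $2m$, hence strictly greater than every element of $G$, which all lie in $\{m,\ldots,2m-1\}$. Consequently no $g\in G$ can be written as a combination of the other generators (such a combination is either $0$, or a single other generator---impossible as the elements of $G$ are distinct---or a sum of total degree at least two, which is $\geq 2m>g$). Thus $G$ is a minimal generating set, so $\msg(S)=G$, $e(S)=|G|=e$, and $\msg(S)\subseteq\{m,\ldots,2m-1\}$ shows $S$ is packed; therefore $S\in\mathcal{C}(m,e)$. I expect this irreducibility-in-the-window step to be the main (though still short) obstacle, since everything hinges on the generators lying below $2m$.

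For the converse, take $S\in\mathcal{C}(m,e)$. Since $m=m(S)$ is the least positive element of $S$, it cannot be a sum of two positive elements of $S$ (each of which is $\geq m$), so $m\in\msg(S)$. As $S$ is packed, the remaining $e-1$ minimal generators lie in $\{m+1,\ldots,2m-1\}$ and are therefore of the form $m+a_i$ with distinct $a_i\in\{1,\ldots,m-1\}$. Setting $A=\{a_1,\ldots,a_{e-1}\}$ gives $|A|=e-1$ and, by the same subtraction identity as above, $\gcd(A\cup\{m\})=\gcd(\msg(S))=1$. Hence $S=\langle\{m\}+(A\cup\{0\})\rangle$ has exactly the required form, completing the proof.
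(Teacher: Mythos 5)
Your proof is correct and takes essentially the same approach as the paper: the subtraction identity $\gcd(\{m\}+(A\cup\{0\}))=\gcd(A\cup\{m\})$ for both directions, and the packing window $\{m,\ldots,2m-1\}$ to read off the form of $\msg(S)$ in the converse. The only difference is that you explicitly justify minimality of the generating set via the $2m$-threshold argument, a step the paper dismisses as ``straightforward to prove.''
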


\begin{proof}
The set $S$ is a numerical semigroup because  $\gcd(\{m\}+(A\cup\{0\}))=\gcd(A\cup\{m\})=1$. It is straightforward to prove that $\msg(S)=\{m\}+(A\cup\{0\}),$ so $S\in\mathcal{C}(m,e)$.

If $S\in\mathcal{C}(m,e)$ then $\msg(S)=\{m,m+r_1,\ldots,m+ r_{e-1}\}$ with $\{r_1,\ldots,r_{e-1}\}\subseteq\{1\ldots,m-1\}$. Moreover, since $\gcd\{m,m+r_1,\ldots,m+r_{e-1}\}=1$, $\gcd\{m,r_1,\ldots,r_{e-1}\}=1$.
\end{proof}

We illustrate the content of the previous proposition with an example.

\begin{example}\label{Ejemplo5}
We are going to compute the set $\mathcal{C}(6,3)$ formed by all the packed numerical  semigroups of multiplicity 6 and embedding dimension 3. For this purpose, and using Proposition \ref{Proposition4}, it is enough computing the subsets $A$ of $\{1,2,3,4,5\}$ of cardinality 2 such that $\gcd(A\cup\{6\})=1$. These sets are $\{1,2\}$, $\{1,3\}$, $\{1,4\}$, $\{1,5\}$, $\{2,3\}$, $\{2,5\}$, $\{3,4\}$, $\{3,5\}$ and $\{4,5\}$. Therefore, $\mathcal{C}(6,3)$ is $\{\langle 6,7,8 \rangle, \langle 6,7,9 \rangle, \langle 6,7,10 \rangle,\langle 6,7,11 \rangle, \langle 6,8,9 \rangle, \langle 6,8,11 \rangle, \langle 6,9,10 \rangle, \langle 6,9,11 \rangle, \langle 6,10,11 \rangle\}.$
\end{example}

Note that if $m$ is a prime number then every subset $A$ of $\{1,\ldots,m-1\}$ with cardinality $e-1$ verifies that $\gcd(A\cup\{m\})=1$. Therefore, we have the following result.

\begin{proposition}\label{Proposicion6}
If $m$ is a prime number and $e$ is an integer number such that $2\leq e\leq m$ then $\mathcal{C}(m,e)$ has cardinality $\binom{m-1}{e-1}$.
\end{proposition}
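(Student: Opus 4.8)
The plan is to invoke Proposition \ref{Proposition4}, which gives a complete and non-redundant parametrization of the elements of $\mathcal{C}(m,e)$: each packed numerical semigroup of multiplicity $m$ and embedding dimension $e$ corresponds to exactly one subset $A\subseteq\{1,\ldots,m-1\}$ with $|A|=e-1$ and $\gcd(A\cup\{m\})=1$. Two distinct such subsets $A$ and $A'$ yield distinct minimal generating sets $\{m\}+(A\cup\{0\})$ and $\{m\}+(A'\cup\{0\})$, hence distinct semigroups (since a numerical semigroup is determined by its minimal system of generators, by Theorem 2.7 of \cite{libro}). Therefore the cardinality of $\mathcal{C}(m,e)$ equals the number of such admissible subsets $A$, and the whole problem reduces to a counting argument over subsets of $\{1,\ldots,m-1\}$.

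Next I would use the hypothesis that $m$ is prime to eliminate the $\gcd$ constraint entirely. As noted in the remark preceding the statement, if $m$ is prime then for any nonempty $A\subseteq\{1,\ldots,m-1\}$ we automatically have $\gcd(A\cup\{m\})=1$: indeed, any common divisor $d$ of $A\cup\{m\}$ must divide $m$, so $d\in\{1,m\}$, but $d$ also divides some element $a\in A$ with $1\le a\le m-1<m$, forcing $d=1$. Consequently, when $m$ is prime, \emph{every} subset $A$ of $\{1,\ldots,m-1\}$ of cardinality $e-1$ is admissible (note $A$ is nonempty since $e\ge 2$ gives $|A|=e-1\ge 1$), with no further restriction.

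It then remains only to count the subsets of $\{1,\ldots,m-1\}$ of cardinality $e-1$. Since $\{1,\ldots,m-1\}$ has $m-1$ elements, this number is exactly $\binom{m-1}{e-1}$, which is the claimed cardinality. The hypothesis $2\le e\le m$ guarantees $1\le e-1\le m-1$, so the binomial coefficient is well-defined and the count is nonvacuous.

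There is no serious obstacle here; the statement is a direct corollary of Proposition \ref{Proposition4} combined with the elementary number-theoretic fact that primality trivializes the $\gcd$ condition. The only point meriting a line of care is to confirm that the parametrization in Proposition \ref{Proposition4} is genuinely a bijection (distinct admissible $A$'s give distinct semigroups), so that we are counting semigroups rather than overcounting; this follows immediately from the uniqueness of the minimal system of generators.
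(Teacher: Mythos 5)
Your proof is correct and follows essentially the same route as the paper: the paper derives this proposition directly from Proposition \ref{Proposition4} together with the remark that primality of $m$ makes the condition $\gcd(A\cup\{m\})=1$ automatic for every nonempty $A\subseteq\{1,\ldots,m-1\}$. Your additional observation that distinct admissible subsets yield distinct semigroups (via uniqueness of the minimal system of generators) is a worthwhile point of care that the paper leaves implicit.
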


Our next goal in this work is to show a recursive procedure that allows us to compute $[S]$ for every $S\in\mathcal{C}(m,e)$. In order to achieve it, in the next section, we set the elements of $[S]$ in a tree.

\section{The tree associated to $[S]$}\label{S3}
A graph $G$ is pair $(V,E)$ where $V$ is a set (with elements called vertices) and $E$ is a subset of $\{(v,w)\in V\times V\mid v\neq w\}$ (with elements called edges). A path which connects the vertices $x$ and $y$ of $G$ is a sequence of different edges of the form $(v_0,v_1), (v_1,v_2),\ldots,(v_{n-1},v_n)$ such that $v_0=x$ and $v_n=y$.

A graph $G$ is a tree if there is a vertex $r$ (known as the root of $G$) such that for any other vertex $x$ of $G$ there exists a unique path connecting $x$ and $r$. If $(x,y)$ is an edge of a tree, we say that $x$ is a child of $y$.

\begin{lemma}\label{Lema7}
If $\{n_1<n_2<\cdots<n_e\}$ is a minimal system of generators of a numerical semigroup and $n_e-n_1>n_1$ then $\{n_1,\ldots,n_{e-1},n_e-n_1\}$ is also a minimal system of generators of a numerical semigroup.
\end{lemma}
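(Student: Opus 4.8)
The plan is to write $d = n_e - n_1$ and $T = \langle n_1, \ldots, n_{e-1}, d \rangle$, and to prove two things: that $T$ is a numerical semigroup, and that $\{n_1, \ldots, n_{e-1}, d\}$ is precisely $\msg(T)$. The first is immediate: since $n_1$ itself belongs to the set and $d = n_e - n_1$, subtracting the multiple $n_1$ leaves the gcd unchanged, so $\gcd\{n_1, \ldots, n_{e-1}, d\} = \gcd\{n_1, \ldots, n_{e-1}, n_e\} = 1$, and hence $T$ is a numerical semigroup. Before checking minimality I would record that the $e$ listed elements are pairwise distinct: the only possible coincidence is $d = n_j$ for some $2 \le j \le e-1$, but then $n_e = d + n_1 = n_j + n_1 \in \langle n_1, \ldots, n_{e-1}\rangle$, contradicting that $\{n_1, \ldots, n_e\}$ is a minimal generating set.

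For minimality I would use the standard criterion that a generating set is minimal if and only if no element lies in the monoid generated by the others, and dispose of the two easy generators first. Since $d > n_1$ and $n_j > n_1$ for $j \ge 2$, the element $n_1$ is the multiplicity $m(T)$; a nonzero combination of the remaining (strictly larger) generators is always $> n_1$, so $n_1 \notin \langle n_2, \ldots, n_{e-1}, d\rangle$. For $d$ itself, if $d \in \langle n_1, \ldots, n_{e-1}\rangle$ then $n_e = d + n_1 \in \langle n_1, \ldots, n_{e-1}\rangle$, again contradicting the minimality of $\msg(S)$.

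The crux is to show that each $n_i$ with $2 \le i \le e-1$ is a minimal generator, and this is where the hypothesis $n_e - n_1 > n_1$ does the essential work. I would argue by contradiction, assuming $n_i = \sum_{j \ne i} \lambda_j n_j + \nu d$ with $\nu, \lambda_j \in \N$ (the sum over $j \in \{1, \ldots, e-1\} \setminus \{i\}$). If $\nu \ge 2$ then $n_i \ge 2d = 2(n_e - n_1)$, which together with $n_i \le n_{e-1} < n_e$ forces $n_e < 2n_1$, i.e.\ $n_e - n_1 < n_1$, contradicting the hypothesis; so $\nu \le 1$. The case $\nu = 0$ would exhibit $n_i$ as redundant already inside the minimal set $\msg(S)$, which is impossible. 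For $\nu = 1$, rearranging yields $\sum_{j \ne i} \lambda_j n_j = n_i + n_1 - n_e$; this value is nonnegative, yet strictly less than $n_1 = m(S)$ because $n_i < n_e$. Since every nonzero combination of the generators (all of which are $\ge m(S)$) is itself $\ge m(S)$, the combination must vanish, forcing $n_i = n_e - n_1 = d$ and contradicting the distinctness established above.

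I expect the main obstacle to be exactly the $\nu = 1$ subcase of the last paragraph: it is the only place where the new generator $d$ genuinely interacts with the old generators, and resolving it requires combining two ingredients — the inequality $n_e - n_1 > n_1$ to rule out $\nu \ge 2$, and the multiplicity bound to collapse the remaining combination. The other assertions (the gcd computation, distinctness, and the treatment of $n_1$ and $d$) are routine once the problem is recast in terms of $T$ and $\msg(S)$.
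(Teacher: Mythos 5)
Your proof is correct and rests on essentially the same idea as the paper's: the paper's entire argument is the one-line observation that a representation of some $n_k$ genuinely involving the new generator would force $n_k \ge (n_e-n_1)+n_1 = n_e > n_k$, and your case analysis on the coefficient $\nu$ (with the multiplicity bound collapsing the $\nu=1$ remainder and the hypothesis killing $\nu\ge 2$) is just a careful unpacking of that same inequality. Your write-up is more complete than the paper's — you explicitly verify the gcd condition, the distinctness of the $e$ elements, and the non-redundancy of $n_1$ and of $n_e-n_1$, all of which the paper leaves implicit — but the route is the same.
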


\begin{proof}
In other case, there exists $k\in\{1,\ldots,e-1\}$ such that $n_k\in\{n_e-n_1\}+\langle n_1,\ldots,n_{k-1},n_{k+1},\dots, ,n_{e-1},n_e-n_1\rangle$. But it is not possible because $n_e-n_1+n_1=n_e>n_k$.
\end{proof}

Let $S$ be a numerical semigroup. We denote by $M(S)$ the maximum of $\msg(S).$
If $S\in\mathcal{L}(m,e),$ we define the following sequence of elements of $\mathcal{L}(m,e)$:
\begin{itemize}
	\item $S_0 = S$,
	\item $S_{n+1} = \langle (\msg(S_n)\backslash\{M(S_n)\})\cup\{M(S_n)-m\} \rangle$ if $M(S_n)-m>m$.
\end{itemize}
Because of Lemma \ref{Lema7}, there exists a sequence: $S=S_0\subsetneq S_1\subsetneq\ldots\subsetneq S_k=\phi(S)\in\mathcal{C}(m,e)$.

\begin{example}\label{Ejemplo8}
Let $S\in \mathcal{L}(5,3)$ be the semigroup minimally generated by $\{ 5,13,21 \}.$ Then, we have the following sequence of elements of $\mathcal{L}(5,3)$: $S_0=\langle 5,13,21\rangle\subsetneq S_1=\langle 5,13,16 \rangle\subsetneq S_2=\langle 5,11,13 \rangle\subsetneq S_3=\langle 5,8,11 \rangle\subsetneq S_4=\langle 5,6,8 \rangle=\phi(S)\in\mathcal{C}(5,3)$.
\end{example}

Let $S$ be in $\mathcal{C}(m,e)$. We define the graph $G([S])$ as follows: $[S]$ is the set of vertices and $(A,B)\in [S]\times[S]$ is an edge if $\msg(B)=(\msg(A)\backslash\{M(A)\})\cup\{M(A)-m\}$ with .

\begin{theorem}\label{Teorema9}
If $S\in\mathcal{C}(m,e)$ then $G([S])$ is a tree with root $S$. Moreover, if $P\in [S]$ and $\msg(P)=\{n_1<n_2<\cdots<n_e\}$ then the children of $P$ in $G([S])$ are the numerical semigroups of the form $\langle (\{n_1,\ldots,n_e\}\backslash\{n_k\})\cup\{n_k+n_1\}\rangle$ such that $k\in\{2,\ldots,e\}$, $n_k+n_1>n_e$ and $n_k+n_1\notin\langle \{n_1,\ldots,n_e\}\backslash\{n_k\} \rangle$.
\end{theorem}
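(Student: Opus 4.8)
The plan is to analyze the map that sends a vertex to its unique parent and to show that iterating it always terminates at the root $S$. Write $\pi(T)=\langle(\msg(T)\setminus\{M(T)\})\cup\{M(T)-m\}\rangle$; an edge $(T,\pi(T))$ exists exactly when $M(T)-m>m$, so my first task is to check that $\pi$ is defined and lands in $[S]$ for every $T\in[S]$ with $T\neq S$. Since every $P\in\mathcal{L}(m,e)$ has $m$ as its smallest minimal generator, I will always write $\msg(P)=\{n_1<\cdots<n_e\}$ with $n_1=m$. If $T\neq S=\phi(T)$ then $T$ is not packed, so $M(T)\geq 2m$; because $2m=m+m$ is never a minimal generator we in fact get $M(T)\geq 2m+1$, whence $M(T)-m>m$. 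Then Lemma \ref{Lema7} (with $n_e=M(T)$ and $n_e-n_1=M(T)-m>n_1$) shows $\{n_1,\dots,n_{e-1},M(T)-m\}$ is again a minimal system of $e$ generators, its least element is still $m$, and replacing $M(T)$ by $M(T)-m$ does not change the residues modulo $m$, so $\phi(\pi(T))=\phi(T)=S$. Hence $\pi(T)\in[S]$ and $(T,\pi(T))$ is an edge.

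For the tree property I would use a strictly decreasing weight. Replacing $M(T)$ by $M(T)-m$ decreases $w(T):=\sum_{x\in\msg(T)}x$ by exactly $m$ while keeping the number of generators equal to $e$, so the chain $T,\pi(T),\pi^2(T),\dots$ has strictly decreasing weight and must terminate. Termination can only occur at a vertex $T'$ with $M(T')-m\leq m$; since $M(T')=2m$ is impossible, this forces $M(T')\leq 2m-1$, i.e.\ $T'$ packed, i.e.\ $T'=\phi(T')=S$. This exhibits, for each $T\in[S]$, the directed path $T\to\pi(T)\to\cdots\to S$ to the root. Uniqueness is then immediate from the paper's directed notion of path: each vertex other than $S$ has exactly one outgoing edge (its parent, determined by $\msg(T)$) and $S$ has none, so any path from $T$ to $S$ is forced to follow $\pi$ at every step. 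This gives that $G([S])$ is a tree with root $S$.

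To identify the children of $P$ with $\msg(P)=\{n_1<\cdots<n_e\}$, I would pass between the two descriptions. A child $C$ satisfies $\pi(C)=P$, so writing $\msg(C)=\{c_1<\cdots<c_e\}$ with $c_e=M(C)$ gives $\msg(P)=\{c_1,\dots,c_{e-1},c_e-m\}$; thus $c_e-m$ equals some $n_k$ and $\msg(C)=(\{n_1,\dots,n_e\}\setminus\{n_k\})\cup\{n_k+n_1\}$ with $c_e=n_k+n_1$. The requirement that $c_e=M(C)$ be the maximum forces $n_k+n_1>n_e$; minimality of the generator $c_e$ forces $n_k+n_1\notin\langle\{n_1,\dots,n_e\}\setminus\{n_k\}\rangle$; and $k\neq1$ because otherwise $n_1=m$ would be dropped and the multiplicity of $C$ would exceed $m$, contradicting $C\in\mathcal{L}(m,e)$. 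Conversely, given $k\in\{2,\dots,e\}$ with $n_k+n_1>n_e$ and $n_k+n_1\notin\langle\{n_1,\dots,n_e\}\setminus\{n_k\}\rangle$, I would check that $(\{n_1,\dots,n_e\}\setminus\{n_k\})\cup\{n_k+n_1\}$ is a minimal system of generators: the new element is minimal by hypothesis, and each retained $n_i$ stays minimal because $n_k+n_1>n_e\geq n_i$ cannot contribute to a representation of $n_i$, so minimality descends from $P$. The residues modulo $m$ are unchanged, giving $\phi(C_k)=\phi(P)=S$ and $C_k\in[S]$; finally $M(C_k)=n_k+n_1$ and $M(C_k)-m=n_k$, so $\pi(C_k)=P$ and $C_k$ is a child.

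The step I expect to require the most care is the well-definedness of $\pi$ on $[S]$, specifically the clean dichotomy that a non-packed vertex has $M(T)\geq 2m+1$ (so the parent remains inside $\mathcal{L}(m,e)$ with the same embedding dimension, via Lemma \ref{Lema7}) while a packed vertex is exactly the root; together with the bookkeeping in the converse direction that removing $n_k$ and inserting the larger element $n_k+n_1$ preserves minimality of all the untouched generators. The remaining verifications, namely weight monotonicity and uniqueness of the parent, are routine once this is in place.
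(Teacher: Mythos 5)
Your proof is correct and follows essentially the same route as the paper: the parent map $\pi$ is exactly the paper's recursive sequence $S_{n+1}=\langle(\msg(S_n)\setminus\{M(S_n)\})\cup\{M(S_n)-m\}\rangle$, Lemma \ref{Lema7} plays the same role, and the two directions of the children characterization are verified just as in the paper's argument. The only difference is that you make explicit several points the paper leaves implicit (non-packed vertices have $M(T)\geq 2m+1$, termination via a decreasing weight, uniqueness of the outgoing edge), which is a welcome sharpening rather than a different approach.
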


\begin{proof}
From the definition and the comment after Lemma \ref{Lema7}, we have that $G([S])$ is a tree with root $S$.

Let $k$ be in $\{2,\ldots,e\}$ such that $n_k+n_1>n_e$ and $n_k+n_1\notin\langle \{n_1,\ldots,n_e\}\backslash\{n_k\} \rangle$. If $H=\langle (\{n_1,\ldots,n_e\}\backslash\{n_k\})\cup\{n_k+n_1\} \rangle$ is clear that $\msg(H)=(\{n_1,\ldots,n_e\}\backslash\{n_k\})\cup\{n_k+n_1\}$ and $\msg(P)=(\msg(H)\setminus\{M(H)\})\cup\{M(H)-m\}$. Therefore $H$ is a child of $P$.

Conversely, if $H$ is a child $P$ then $(H,P)$ is an edge of $G([S])$ and we obtain that $H$ is as the theorem describes.
\end{proof}

The previous theorem allows us to build recursively the elements of $[S]$ as it is shown in the next example.

\begin{example}\label{Ejemplo11}
Figure \ref{arbol5_6_8} shows some levels of the tree $G([\langle 5,6,8 \rangle])$.

\begin{figure}[!h]
\scriptsize
\xymatrix@C=0.5em{
 & & & &                                       &                                        & \langle 5,6,8\rangle \ar@{<-}[rd]\ar@{<-}[ld]  &                       \\
 & & & &                                       & \langle 5,8,11\rangle \ar@{<-}[d]           &                                      & \langle 5,6,13\rangle \\
 & & & &                                       & \langle 5,11,13\rangle \ar@{<-}[rd]\ar@{<-}[ld]  &                                      &                       \\
 & & & & \langle 5,13,16\rangle \ar@{<-}[rd]\ar@{<-}[ld] &                                        &      \langle 5,11,18\rangle \ar@{<-}[d]  &                       \\
 & & & \langle 5,16,18\rangle \ar@{<-}[rd]\ar@{<-}[ld] &  &    \langle 5,13,21 \rangle       &      \langle 5,11,23\rangle \ar@{<-}[d]  &                       \\
 & & \langle 5,18,21\rangle \ar@{<-}[rd]\ar@{<-}[ld] &  & \langle 5,16,23\rangle \ar@{<-}[rd] &     &      \langle 5,11,28\rangle  &                       \\
 & \langle 5,21,23\rangle \ar@{<-}[rd]\ar@{<-}[ld] &  & \langle 5,18,26\rangle \ar@{<-}[rd] &  &   \langle 5,16,28\rangle\ar@{<-}[rd]  &        &                       \\
 \langle 5,23,26\rangle &   & \langle 5,21,28\rangle &  & \langle 5,18,31\rangle &     & \langle 5,16,33\rangle       &                       \\
}\caption{Seven levels of the tree of the packed numerical  semigroup $\langle 5,6,8 \rangle.$}\label{arbol5_6_8}
\end{figure}
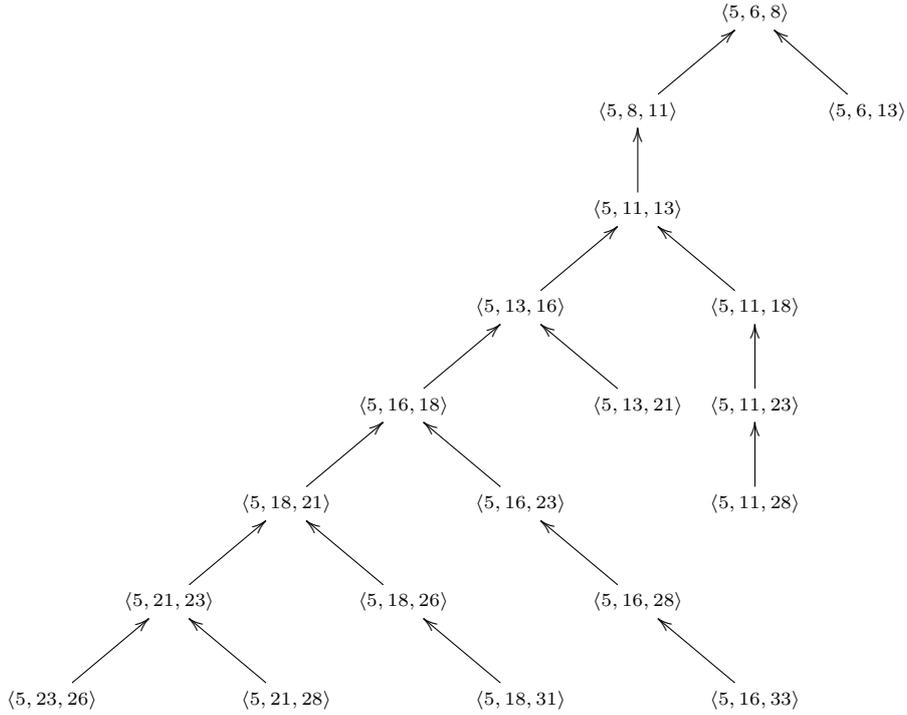

\end{example}

Note that the cardinality  of $[S]$ is infinity, so it is impossible to compute all the elements of $[S]$. However, in the next section, we show that it is possible to  compute all the elements of $[S]$ with a fixed Frobenius number or genus.

\section{Frobenius number and genus}\label{S4}

Let $P$ be a numerical semigroup with minimal generating set $\{n_1<n_2<\cdots<n_e\}$, $k\in\{2,\ldots,e\}$ and $H$ be the numerical semigroup generated by $(\{n_1,\ldots,n_e\}\backslash\{n_k\})\cup\{n_k+n_1\}$. Then $H\subset P$, $F(P)\leq F(H)$ and $g(P)<g(H)$. We can formulate the following result.

\begin{proposition}\label{Proposicion12}
If $S\in\mathcal{C}(m,e)$, $P\in[S]$ and $(H,P)$ is an edge of $G([S])$ then $F(P)\leq F(H)$ and $g(P)<g(H)$.
\end{proposition}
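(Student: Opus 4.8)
The plan is to reduce both inequalities to a single proper containment $H\subsetneq P$. Since $(H,P)$ is an edge of $G([S])$, both $H$ and $P$ lie in $[S]\subseteq\mathcal{L}(m,e)$ and are therefore numerical semigroups, and by Theorem~\ref{Teorema9} (applied to $P$, writing $\msg(P)=\{n_1<n_2<\cdots<n_e\}$ with $n_1=m$) there is an index $k\in\{2,\ldots,e\}$ such that $\msg(H)=(\{n_1,\ldots,n_e\}\setminus\{n_k\})\cup\{n_k+n_1\}$. Once $H\subsetneq P$ is established, everything else is purely set-theoretic.

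First I would check $H\subseteq P$. Each generator $n_i$ of $H$ with $i\neq k$ already lies in $P$, and the one new generator $n_k+n_1$ is a sum of two elements of $P$, hence also lies in $P$. As $P$ is closed under addition, $H=\langle\msg(H)\rangle\subseteq P$. To make the inclusion strict I would exhibit $n_k\in P\setminus H$. Clearly $n_k\in P$. If $n_k$ were in $H$, it would be a non-negative integer combination of the generators of $H$; but any such combination in which $n_k+n_1$ occurs with positive coefficient is at least $n_k+n_1>n_k$, so $n_k$ would in fact lie in $\langle\{n_1,\ldots,n_e\}\setminus\{n_k\}\rangle$, contradicting the minimality of $\msg(P)$. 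Hence $n_k\notin H$ and $H\subsetneq P$.

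Finally I would translate the containment into the two statements. From $H\subseteq P$ we get $\N\setminus P\subseteq\N\setminus H$, and the strictness upgrades this to $\N\setminus P\subsetneq\N\setminus H$. Taking the largest element of each (both finite) gives $F(P)=\max(\N\setminus P)\leq\max(\N\setminus H)=F(H)$, while comparing cardinalities of the two finite sets gives $g(P)<g(H)$. The only step that requires genuine care is the strictness of the inclusion, i.e.\ the verification that $n_k\notin H$; everything after that is immediate. Note also that $n_1=m>0$ is exactly what powers the inequality $n_k+n_1>n_k$, so the argument uses the tree structure only through this elementary feature.
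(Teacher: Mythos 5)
Your proof is correct and follows essentially the same route as the paper, which likewise reduces both inequalities to the strict containment $H\subsetneq P$ (the paper simply asserts this containment in the paragraph preceding the proposition, without the verification you supply). Your explicit check that $n_k\in P\setminus H$, using the minimality of $\msg(P)$, is precisely the detail the paper leaves implicit, so nothing further is needed.
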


From previous proposition, for every semigroup $S$
the numerical semigroups obtained from it have a greater or equal Frobenius number and a greater genus than $S$. These facts allow us to formulate Algorithm \ref{Algoritmo13} and \ref{Algoritmo16} for computing all the elements in $[S]$ with Frobenius number less than or equal to a given integer and genus less than or equal to another given integer.

\begin{algorithm}[H]
\caption{Sketch of the algorithm to determinate the elements of $T\in [S]$ such that $F(T)\leq F$ for a fixed integer $F$.}\label{Algoritmo13}
\textbf{INPUT:} $(S,F)$ where $S$ is a packed numerical  semigroup and $F$ is a positive integer.\\
\textbf{OUTPUT:}  $\{T\in[S]\mid F(T)\leq F\}$.
\begin{algorithmic}[1]
    \If{$F(S)>F$}
        \State \Return{$\emptyset$}
    \EndIf
    \While{true}
	   \State $A=\{S\}$ and $B=\{S\}$.
	   \State $C=\left\{H \mid
			   H \mbox{ is a child of an element of }B,~ F(H)\leq F\right\}$.
    	\If{$C=\emptyset$}
            \State \Return{$A$}
        \EndIf
	   \State $A=A\cup C$, $B=C$.
    \EndWhile
\end{algorithmic}
\end{algorithm}

Next example illustrates how the previous algorithm works.

\begin{example}\label{Ejemplo14}
We compute all the elements of $[\langle 5,6,8 \rangle]$ with Frobenius number less than or equal to  25.
\begin{itemize}
	\item $A=\{\langle 5,6,8 \rangle\}$, $B=\{\langle 5,6,8 \rangle\}$ and $C=\{\langle 5,8,11 \rangle, \langle 5,6,13 \rangle \}$.
	\item $A=\{\langle 5,6,8 \rangle, \langle 5,8,11 \rangle, \langle 5,6,13 \rangle \}$, $B=\{\langle 5,8,11 \rangle, \langle 5,6,13 \rangle\}$ and $C=\{\langle 5,11,13 \rangle \}$.
	\item $A=\{\langle 5,6,8 \rangle, \langle 5,8,11 \rangle, \langle 5,6,13 \rangle, \langle 5,11,13 \rangle \}$, $B=\{\langle 5,11,13 \rangle\}$ and $C=\{\langle 5,11,18 \rangle \}$.
	\item $A=\{\langle 5,6,8 \rangle, \langle 5,8,11 \rangle, \langle 5,6,13 \rangle, \langle 5,11,13 \rangle, \langle 5,11,18 \rangle \}$, $B=\{\langle 5,11,18 \rangle\}$ and $C=\emptyset$.
\end{itemize}
Therefore, the set $\{T\in[\langle 5,6,8 \rangle]\mid F(T)\leq 25\}$ is equal to $\{\langle 5,6,8 \rangle, \langle 5,8,11 \rangle, \langle 5,6,13 \rangle, \langle 5,11,13 \rangle, \langle 5,11,18 \rangle \}$.
\end{example}

Next algorithm allows us to compute all the numerical semigroups with multiplicity $m$, embedding dimension $e$ and Frobenius number less than or equal to $F$. Note that if $S$ is a numerical semigroup, such that $S\neq\N$ then $m(S)-1\notin S$ and then $m(S)-1\leq F(S)$.

\begin{algorithm}[H]
\caption{Sketch of the algorithm to determinate the numerical semigroups with a fixed embedding dimension and bounded Frobenius number.}\label{Algoritmo15}
\textbf{INPUT:} $m,e,$ and $F$ positive integers such that $2\leq e\leq m\leq F+1$.\\
\textbf{OUTPUT:} $\{S\mid S\mbox{ numerical semigroup, } m(S)=m,\ e(S)=e\mbox{ and }F(S)\leq F\}$.
\begin{algorithmic}[1]
	\State compute $\mathcal{C}(m,e)$, using Proposition \ref{Proposition4}.
    \ForAll{$S\in\mathcal{C}(m,e)$}
        \State compute $A(S)=\{T\in[S]\mid F(T)\leq F\}$, using Algorithm \ref{Algoritmo13}.
    \EndFor
    \State \Return{$\cup_{S\in\mathcal{C}(m,e)}A(S)$}
\end{algorithmic}
\end{algorithm}

Now, we exchange the concept for Frobenius number for the genus in Algorithm \ref{Algoritmo13} and \ref{Algoritmo15}.

\begin{algorithm}[H]
\caption{Sketch of the algorithm to determinate the numerical semigroups with bounded genus.}\label{Algoritmo16}
\textbf{INPUT:} $(S,g)$ where $S$ is a packed numerical  semigroup and $g$ is a positive integer.\\
\textbf{OUTPUT:}  $\{T\in[S]\mid g(T)\leq g\}$.
\begin{algorithmic}[1]
	\If{$g(S)>g$}
        \State \Return{$\emptyset$}
    \EndIf
	\State $A=\{S\}$ and $B=\{S\}$.
    \While{true}
        \State $C=\{H \mid
        ~ H \mbox{ is a child of an element of }B,~
         g(H)\leq g\}$.
        \If{$C=\emptyset$}
            \State \Return{$A$}
        \EndIf
        \State $A=A\cup C$, $B=C$
    \EndWhile
\end{algorithmic}
\end{algorithm}

Next example illustrates how the previous algorithm works.

\begin{example}\label{Ejemplo17}
We compute all the elements of $[\langle 5,6,8 \rangle]$ with genus less than or equal to 15.
\begin{itemize}
	\item $A=\{\langle 5,6,8 \rangle\}$, $B=\{\langle 5,6,8 \rangle\}$ and $C=\{\langle 5,8,11 \rangle, \langle 5,6,13 \rangle \}$.
	\item $A=\{\langle 5,6,8 \rangle, \langle 5,8,11 \rangle, \langle 5,6,13 \rangle \}$, $B=\{\langle 5,8,11 \rangle, \langle 5,6,13 \rangle\}$ and $C=\{\langle 5,11,13 \rangle \}$.
	\item $A=\{\langle 5,6,8 \rangle, \langle 5,8,11 \rangle, \langle 5,6,13 \rangle, \langle 5,11,13 \rangle \}$, $B=\{\langle 5,11,13 \rangle\}$ and $C=\{\langle 5,11,18 \rangle \}$.
	\item $A=\{\langle 5,6,8 \rangle, \langle 5,8,11 \rangle, \langle 5,6,13 \rangle, \langle 5,11,13 \rangle, \langle 5,11,18 \rangle \}$, $B=\{\langle 5,11,18 \rangle\}$ and $C=\emptyset$.
\end{itemize}
Algorithm \ref{Algoritmo16} returns $\{\langle 5,6,8 \rangle, \langle 5,8,11 \rangle, \langle 5,6,13 \rangle, \langle 5,11,13 \rangle, \langle 5,11,18 \rangle \}$.
\end{example}

Note that if $S$ is a numerical semigroup such that $S\neq\N$ then $\{1,\ldots,m(S)-1\}\subseteq\N\backslash S$ and then $m(S)-1\leq g(S)$.

Next algorithm is obtained combining the previous results.

\begin{algorithm}[H]
\caption{Sketch of an algorithm for computing numerical semigroups with fixed multiplicity, embedding dimension and bounded genus}\label{Algoritmo18}
\textbf{INPUT:} $m,e,$ and $g$ positive integers such that $2\leq e\leq m\leq g+1$.\\
\textbf{OUTPUT:} $\{S\mid S\mbox{ numerical semigroup, } m(S)=m,\ e(S)=e\mbox{ and }g(S)\leq g\}$.
\begin{algorithmic}[1]
	\State compute $\mathcal{C}(m,e)$, using Proposition \ref{Proposition4}.
    \ForAll{$S\in\mathcal{C}(m,e)$}
        \State compute $A(S)=\{T\in[S]\mid g(T)\leq g\}$, using Algorithm \ref{Algoritmo16}.
    \EndFor
    \State \Return{$\cup_{S\in\mathcal{C}(m,e)}A(S)$}
\end{algorithmic}
\end{algorithm}

Note that applying Algorithm \ref{Algoritmo13} and \ref{Algoritmo15} we have to compute the Frobenius number and the genus, respectively, of the numerical semigroups we obtain recursively when we build $[S]$. Results of the next section allow us to compute easily the Frobenius number and the genus of every semigroup of $[S]$.

\section{The Apery set of the elements of $[S]$}\label{S5}

Let $S$ be a numerical semigroup and $n\in S\backslash\{0\}$. The Apery set (named by \cite{apery}) of $n$ in $S$ is $\Ap(S,n)=\{s\in S\mid s-n\notin S\}$. Next result is a consequence of Lema 2.4 from \cite{libro}.

\begin{lemma}\label{Lema19}
Let $S$ be a numerical semigroup and $n\in S\backslash\{0\}$. Then $\Ap(S,n)$ has cardinality $n$. Moreover, $\Ap(S,n)=\{w(0)=0,w(1),\ldots,w(n-1)\}$ where $w(i)$ is the less element in $S$ congruent with $i$ modulo $n$.
\end{lemma}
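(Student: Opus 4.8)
The plan is to prove both assertions at once by showing that $\Ap(S,n)$ is exactly the set of residue-class minima $\{w(0),w(1),\ldots,w(n-1)\}$, from which the cardinality statement follows immediately since these $n$ elements are pairwise distinct (they lie in distinct residue classes modulo $n$). First I would check that each $w(i)$ is well defined: because $\N\setminus S$ is finite, every residue class $i\in\{0,1,\ldots,n-1\}$ contains all but finitely many of its representatives in $S$, so the set $\{s\in S\mid s\equiv i\pmod n\}$ is nonempty and has a least element, which we call $w(i)$. Note that $w(0)=0$, since $0\in S$ is the least nonnegative integer in its class.

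Next I would establish the key characterization: for $s\in S$, we have $s\in\Ap(S,n)$ if and only if $s=w(i)$ where $i\equiv s\pmod n$; that is, $s$ lies in $\Ap(S,n)$ precisely when it is the minimal element of $S$ in its own residue class. For the easy direction, suppose $s=w(i)$. Then $s-n\equiv i\pmod n$ and $s-n<w(i)$, so $s-n$ cannot belong to $S$ (it is either negative, hence outside $S\subseteq\N$, or a smaller element of $S$ in class $i$, contradicting minimality of $w(i)$); thus $s-n\notin S$ and $s\in\Ap(S,n)$. For the converse, take $s\in\Ap(S,n)$ with $i\equiv s\pmod n$ and suppose $s\neq w(i)$. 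Then $w(i)<s$ and $s-w(i)=kn$ for some integer $k\geq 1$, so the crucial identity $s-n=w(i)+(k-1)n$ holds; since $w(i)\in S$, $n\in S$, and $S$ is closed under addition with $k-1\geq 0$, this forces $s-n\in S$, contradicting $s\in\Ap(S,n)$. Hence $s=w(i)$.

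From the characterization, the map $i\mapsto w(i)$ is a bijection from $\{0,1,\ldots,n-1\}$ onto $\Ap(S,n)$: it is injective because distinct residues give distinct classes, and surjective by the two directions just proved. This yields both $\lvert\Ap(S,n)\rvert=n$ and the explicit description $\Ap(S,n)=\{w(0)=0,w(1),\ldots,w(n-1)\}$. I do not expect a genuine obstacle here, as the result is classical; the only point requiring care is the converse direction, where one must exhibit $s-n$ as a nonnegative combination $w(i)+(k-1)n$ and invoke closure of $S$ under addition together with $n\in S$ to conclude $s-n\in S$. Everything else is bookkeeping on residue classes, so the argument can be kept short, with the cofiniteness of $\N\setminus S$ guaranteeing that each $w(i)$ exists.
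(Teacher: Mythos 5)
Your proof is correct and complete. Note that the paper gives no proof of this lemma at all---it is stated as a consequence of Lemma 2.4 of \cite{libro}---and your argument (existence of each $w(i)$ from the cofiniteness of $S$ in $\N$, the characterization of $\Ap(S,n)$ as the set of residue-class minima via the identity $s-n=w(i)+(k-1)n$, and the resulting bijection $i\mapsto w(i)$) is exactly the standard one found in that reference, so it serves as a faithful self-contained substitute for the omitted proof.
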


The set $\Ap(S,n)$ give us a lot of information of $S$.
The following result is found in $\cite{selmer}$.

\begin{lemma}\label{Lema20}
Let $S$ be a numerical semigroup and $n\in S\backslash\{0\}$. Then:
\begin{itemize}
    \item $F(S)=\max(\Ap(S,n))-n$.
    \item $g(S)=\frac{1}{n}(\sum_{w\in \Ap(S,n)}w)-\frac{n-1}{2}$.
\end{itemize}
\end{lemma}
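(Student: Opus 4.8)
The plan is to derive both formulas from a single structural observation about membership in $S$ organized by residue classes modulo $n$. By Lemma \ref{Lema19}, $\Ap(S,n)=\{w(0)=0,w(1),\ldots,w(n-1)\}$, where $w(i)$ is the least element of $S$ congruent to $i$ modulo $n$. First I would prove the key characterization: for every integer $x$ with $x\equiv i\pmod n$, one has $x\in S$ if and only if $x\geq w(i)$. The forward implication is just minimality of $w(i)$; for the converse, if $x\geq w(i)$ and $x\equiv i\pmod n$, then $x=w(i)+kn$ for some $k\in\N$, and since $w(i)\in S$, $n\in S$ and $S$ is closed under addition, we obtain $x\in S$.

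For the Frobenius number, I would apply this characterization class by class. Within the residue class $i$, the integers not in $S$ are precisely those of the form $w(i)-jn$ with $j\geq 1$, so the largest non-element in that class is $w(i)-n$. Taking the maximum over all classes $i\in\{0,\ldots,n-1\}$, the greatest integer not in $S$ is $\max_i\bigl(w(i)-n\bigr)=\max_i w(i)-n=\max(\Ap(S,n))-n$, which is the first formula.

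For the genus, I would count the gaps in each residue class. Writing $w(i)=i+q_i n$ with $q_i\in\N$, the non-negative integers in class $i$ that fail to lie in $S$ are $i,\,i+n,\,\ldots,\,i+(q_i-1)n$, exactly $q_i=\frac{w(i)-i}{n}$ of them. Summing over all classes and using $\sum_{i=0}^{n-1} i=\frac{n(n-1)}{2}$ gives
\[
g(S)=\sum_{i=0}^{n-1}\frac{w(i)-i}{n}=\frac{1}{n}\sum_{w\in\Ap(S,n)}w-\frac{1}{n}\cdot\frac{n(n-1)}{2}=\frac{1}{n}\Bigl(\sum_{w\in\Ap(S,n)}w\Bigr)-\frac{n-1}{2},
\]
which is the second formula.

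The computations are routine once the characterization is in place, so the only care needed is in handling the class $i=0$ (where $w(0)=0$ contributes no gaps and the formal ``largest non-element'' $w(0)-n=-n$ is harmlessly negative and never attains the maximum since $S\neq\N$) and in the bookkeeping of the arithmetic sum of residues. The crux of the argument is the equivalence $x\in S\iff x\geq w(i)$: it reduces membership in the semigroup to a single threshold per residue class and thereby makes both the Frobenius count and the gap count immediate.
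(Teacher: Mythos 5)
Your proof is correct. The paper itself gives no argument for this lemma --- it is quoted directly from Selmer's work (\cite{selmer}) --- so there is nothing internal to compare against; your derivation is the standard one (essentially the proof of Proposition 2.12 in \cite{libro}): the threshold characterization $x\in S \iff x\geq w(i)$ for $x\equiv i \pmod n$ is exactly the right key step, and both the maximum computation for $F(S)$ and the gap count $q_i=\frac{w(i)-i}{n}$ per residue class follow correctly from it. Your bookkeeping, including the degenerate class $i=0$, is sound, so the proposal stands as a complete and valid proof of the cited result.
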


The following result is a consequence of Lemma \ref{Lema19}.

\begin{lemma}\label{Lema21}
Let $S$ be a numerical semigroup with minimal system of generators $\{n_1,n_2,\ldots,n_e\}$ and $\Ap(S,n_1)=\{0,w(1),\ldots,w(n_1-1)\}$. Then $w(i)=\min\{a_2n_2+\cdots+a_en_e\mid (a_2,\ldots,a_e)\in\N^{e-1}\mbox{ and } a_2n_2+\cdots+a_en_e\equiv i \mod n_1\}$.
\end{lemma}

Note that the set $\{(a_2,\ldots,a_e)\in\N^{e-1} \mid a_2n_2+\cdots+a_en_e\equiv i\mod n_1\}$ has a finite number of minimal elements (using the usual ordering in $\N^{e-1}$) by Dickson's Lemma (Theorem 5.1 from \cite{finitely}). We denote the set of these minimal elements by $\mathcal{M}((n_1,\ldots,n_e),i)$. Next result is a obtained from Lemma \ref{Lema21}.

\begin{proposition}\label{Proposicion22}
Let $S$ be a numerical semigroup with minimal system of generators $\{n_1,n_2,\ldots,n_e\}$ and $\Ap(S,n_1)=\{0,w(1),\ldots,w(n_1-1)\}$. Then $w(i)=\min\{a_2n_2+\cdots+a_en_e\mid (a_2,\ldots,a_e)\in\mathcal{M}((n_1,\ldots,n_e),i)\}$.
\end{proposition}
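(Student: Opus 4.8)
The plan is to exploit the fact that the linear form $f(a_2,\ldots,a_e)=a_2n_2+\cdots+a_en_e$ is monotone with respect to the usual (componentwise) partial order on $\N^{e-1}$, since every $n_j$ is positive: if $(a_2,\ldots,a_e)\leq(b_2,\ldots,b_e)$ componentwise, then $f(a_2,\ldots,a_e)\leq f(b_2,\ldots,b_e)$. Write $T_i=\{(a_2,\ldots,a_e)\in\N^{e-1}\mid a_2n_2+\cdots+a_en_e\equiv i \bmod n_1\}$. By Lemma \ref{Lema21} we have $w(i)=\min\{f(a)\mid a\in T_i\}$, and by definition $\mathcal{M}((n_1,\ldots,n_e),i)$ is exactly the set of minimal elements of $T_i$, which is finite by Dickson's Lemma. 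The goal is therefore to show that this minimum over the whole set $T_i$ is already attained on its minimal elements.

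First I would record the trivial inequality. Since $\mathcal{M}((n_1,\ldots,n_e),i)\subseteq T_i$, minimizing $f$ over the smaller set can only enlarge the value, so $\min\{f(a)\mid a\in\mathcal{M}((n_1,\ldots,n_e),i)\}\geq w(i)$.

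For the reverse inequality I would choose some $a^*\in T_i$ realizing $f(a^*)=w(i)$, which exists by Lemma \ref{Lema21}. By the defining property of minimal elements---every element of $T_i$ dominates some minimal element of $T_i$---there is $b\in\mathcal{M}((n_1,\ldots,n_e),i)$ with $b\leq a^*$ componentwise. Monotonicity of $f$ then yields $f(b)\leq f(a^*)=w(i)$, so $\min\{f(a)\mid a\in\mathcal{M}((n_1,\ldots,n_e),i)\}\leq w(i)$. Combining the two inequalities gives the asserted equality.

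I do not expect a genuine obstacle here. The only two facts that must be verified are that $f$ is order-preserving (immediate from $n_j>0$) and that every point of $T_i$ lies above a minimal element of $T_i$ (a standard consequence of Dickson's Lemma, already invoked in the text to guarantee that $\mathcal{M}((n_1,\ldots,n_e),i)$ is finite). The proposition is thus essentially a restatement of Lemma \ref{Lema21}, together with the observation that a monotone objective over a subset of $\N^{e-1}$ attains its minimum at one of the finitely many minimal elements of that subset; its practical value is purely computational, replacing the infinite search of Lemma \ref{Lema21} by a finite one.
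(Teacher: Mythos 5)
Your proof is correct and is precisely the argument the paper leaves implicit: the paper states Proposition \ref{Proposicion22} as an immediate consequence of Lemma \ref{Lema21} without writing a proof, the intended reasoning being exactly your two inequalities (restriction to a subset can only increase the minimum, and monotonicity of the linear form plus the fact that every element of $T_i$ dominates a minimal element gives the reverse). Your write-up simply makes that standard Dickson-type argument explicit.
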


Next example illustrates the previous results.

\begin{example}\label{Ejemplo23}
In this example we try to compute the Apery set of the numerical semigroups of $[\langle 5,6,8 \rangle]$ that we obtained in Example \ref{Ejemplo11}.

For every $i\in\{1,2,3,4\}$ let $A(i)$ be the set $\{(a_2,a_3)\in\N^2\mid a_2\cdot 1+a_3\cdot 3\equiv i \mod 5\},$ and let $\mathcal{M}(i)$ be the set of the minimal elements of $A(i)$. Then, $\mathcal{M}(1)=\{(1,0),(0,2)\}$, $\mathcal{M}(2)=\{(2,0),(0,4),(1,2)\}$, $\mathcal{M}(3)=\{(3,0),(0,1)\}$ and $\mathcal{M}(4)=\{(4,0),(0,3),(1,1)\}$.

Now, if we take an element from $[\langle 5,6,8 \rangle]$, for example $S=\langle 5,21,13 \rangle,$ and we want to compute $\Ap(S,5)=\{0,w(1),w(2),w(3),w(4)\}$, by applying Proposition \ref{Proposicion22} we have that $w(1)=\min\{21,26\}=21$, $w(2)=\min\{42,52,47\}=42$, $w(3)=\min\{63,13\}=13$ and $w(4)=\min\{84,39,34\}=34$.
\end{example}

Note that in the previous example it was easy to compute $\mathcal{M}(i)$ for every $i\in\{1,2,3,4\}$. Now, we want to give an algorithm that always allows us to compute $\mathcal{M}((n_1,\ldots,n_e),i)$. In order to do it, we introduce the following sets:
$$C(1)=\{(x_2,\ldots,x_e)\in\N^{e-1}\mid n_2x_2+\cdots+n_ex_e\equiv i\mod n_1\},$$
$$C(2)=\{(x_1,x_2,\ldots,x_e)\in\N^e\mid (-n_1)x_1+n_2x_2+\cdots+n_ex_e = i\},$$
$$C(3)=\{(x_1,x_2,\ldots,x_e,x_{e+1})\in\N^{e+1}\mid (-n_1)x_1+n_2x_2+\cdots+n_ex_e+(-i)x_{e+1} = 0\}.$$

\begin{lemma}\label{Lema24}
If $(a_2,\ldots,a_e)\in C(1)$ then there exists $a_1\in\N$ such that $(a_1,a_2,\ldots,a_e)\in C(2)$.
\end{lemma}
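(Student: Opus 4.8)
Looking at Lemma~\ref{Lema24}, I need to show that membership in $C(1)$ lifts to membership in $C(2)$ by supplying an appropriate first coordinate $a_1$.

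Let me restate what I must prove. I am given $(a_2,\ldots,a_e)\in C(1)$, which means $n_2a_2+\cdots+n_ea_e\equiv i\pmod{n_1}$. I want to produce $a_1\in\N$ such that $(a_1,a_2,\ldots,a_e)\in C(2)$, i.e. $(-n_1)a_1+n_2a_2+\cdots+n_ea_e=i$. The plan is to solve this equation explicitly for $a_1$ and then verify that the resulting value is a nonnegative integer. Solving gives $n_1a_1=n_2a_2+\cdots+n_ea_e-i$, so the only candidate is
\[
a_1=\frac{n_2a_2+\cdots+n_ea_e-i}{n_1}.
\]

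So the argument splits into two verifications. First I would check that $a_1$ is an \emph{integer}: this is immediate from the congruence hypothesis, since $n_2a_2+\cdots+n_ea_e\equiv i\pmod{n_1}$ says precisely that $n_1$ divides the numerator $n_2a_2+\cdots+n_ea_e-i$. Second I would check that $a_1$ is \emph{nonnegative}, which requires the numerator to be nonnegative, i.e. $n_2a_2+\cdots+n_ea_e\geq i$. This is where I must be a little careful, because the congruence only pins down the value modulo $n_1$, not its size. I expect this to be the one genuine point of the proof. However, note from Lemma~\ref{Lema21} and the surrounding setup that $i$ ranges over residues $\{1,\ldots,n_1-1\}$, so $i<n_1$; meanwhile, if $(a_2,\ldots,a_e)\neq(0,\ldots,0)$ the sum $n_2a_2+\cdots+n_ea_e$ is a positive element of $S$ congruent to $i\neq 0\pmod{n_1}$, hence is itself at least $i$ (being the smallest nonnegative integer in its residue class would give exactly $i$, and any larger element of the class only increases the sum). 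The case $i\equiv 0$ is excluded since $i\in\{1,\ldots,n_1-1\}$, and the zero tuple cannot lie in $C(1)$ for such $i$.

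Concretely, once $a_1$ is shown to be a nonnegative integer, substituting it back recovers the defining equation of $C(2)$ by construction, so $(a_1,a_2,\ldots,a_e)\in C(2)$ and the proof is complete. The heart of the matter is therefore the nonnegativity check; the integrality is a direct translation of the congruence, and the final membership statement is a one-line substitution. If the paper intends $i$ to range only over $\{1,\ldots,n_1-1\}$ (as the Apery-set indexing in Lemma~\ref{Lema21} and Proposition~\ref{Proposicion22} strongly suggests), then the nonnegativity follows cleanly; I would make that hypothesis on $i$ explicit at the start of the argument to avoid the degenerate situation where the numerator could be negative.
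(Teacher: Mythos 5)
Your proof is correct and takes essentially the same approach as the paper, whose entire proof is the one-line observation that the congruence yields $a_1\in\N$ with $n_2a_2+\cdots+n_ea_e=i+a_1n_1$. You simply make explicit the two details the paper leaves implicit: integrality of $a_1$ (immediate from the congruence) and nonnegativity, which as you note rests on the Apery-set convention that $i\in\{1,\ldots,n_1-1\}$, so that any nonnegative integer in the residue class of $i$ modulo $n_1$ is at least $i$.
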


\begin{proof}
It is enough to note that if $n_2a_2+\cdots+n_ea_e\equiv i\mod n_1$ then, there exist $a_1\in\N$ such that $n_2a_2+\cdots+n_ea_e = i+a_1n_1$.
\end{proof}

Thanks to \cite{JLMS} we know that $C(3)$ is a finitely generated submonoid of $\N^{e+1}$. Next result can be deduced from Lemma 2 of $\cite{JLMS}$.

\begin{lemma}\label{Lema25}
Let $A$ be the set $\{\alpha_1,\ldots,\alpha_t\}$ with $\alpha_i=(\alpha_{i1},\alpha_{i2},\ldots,\alpha_{ie},\alpha_{i\,e+1})$ a system of generators of $C(3)$. If we suppose that $\alpha_1,\ldots,\alpha_d$ are the elements in $A$ with the last coordinate equal to zero and $\alpha_{d+1},\ldots,\alpha_q$ are the elements of $S$ with the last coordinate equal to 1, then $C(2)=\{\bar{\alpha}_{d+1},\ldots\bar{\alpha}_q\}+\langle \bar{\alpha}_1,\ldots,\bar{\alpha}_d \rangle$ where $\bar{\alpha}_i=(\alpha_{i1},\alpha_{i2},\ldots,\alpha_{ie})$.
\end{lemma}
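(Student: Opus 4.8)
The plan is to exploit the evident bijection between $C(2)$ and the ``slice'' of $C(3)$ cut out by requiring the last coordinate to equal $1$. First I would note that appending a $1$ as the $(e+1)$-st coordinate sends any $(x_1,\ldots,x_e)\in C(2)$ to $(x_1,\ldots,x_e,1)$, and since $(-n_1)x_1+n_2x_2+\cdots+n_ex_e+(-i)\cdot 1 = i-i = 0$, this vector lies in $C(3)$. Conversely, deleting the last coordinate of any $c\in C(3)$ with $c_{e+1}=1$ produces an element of $C(2)$. Thus the projection $(x_1,\ldots,x_{e+1})\mapsto(x_1,\ldots,x_e)$, which is exactly the operation $\alpha\mapsto\bar\alpha$ of the statement, restricts to a bijection between $\{c\in C(3)\mid c_{e+1}=1\}$ and $C(2)$.

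The heart of the argument is to describe this slice in terms of the generating set $A$. Since $A$ generates the monoid $C(3)$, every $c$ in the slice can be written as $c=\sum_{j=1}^t\lambda_j\alpha_j$ with $\lambda_j\in\N$. Reading off the last coordinate gives $\sum_{j=1}^t\lambda_j\alpha_{j\,e+1}=1$. Because each $\alpha_{j\,e+1}$ is a nonnegative integer, this single equation forces a very rigid pattern: no generator with $\alpha_{j\,e+1}\ge 2$ can appear (it would overshoot the target $1$), exactly one generator with $\alpha_{j\,e+1}=1$ appears, and it does so with coefficient $1$, while the remaining coefficients are supported on the generators with vanishing last coordinate. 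In the enumeration of the lemma these generators are precisely some $\alpha_{j_0}\in\{\alpha_{d+1},\ldots,\alpha_q\}$ together with $\{\alpha_1,\ldots,\alpha_d\}$, so $c=\alpha_{j_0}+\sum_{j=1}^d\lambda_j\alpha_j$. Projecting down yields $\bar c=\bar\alpha_{j_0}+\sum_{j=1}^d\lambda_j\bar\alpha_j$, which establishes the inclusion $C(2)\subseteq\{\bar\alpha_{d+1},\ldots,\bar\alpha_q\}+\langle\bar\alpha_1,\ldots,\bar\alpha_d\rangle$.

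For the reverse inclusion I would run the computation backwards. Any element $\bar\alpha_{j_0}+\sum_{j=1}^d\lambda_j\bar\alpha_j$ with $j_0\in\{d+1,\ldots,q\}$ lifts to $\alpha_{j_0}+\sum_{j=1}^d\lambda_j\alpha_j\in C(3)$, and its last coordinate equals $1+\sum_{j=1}^d\lambda_j\cdot 0 = 1$ precisely because the $\alpha_j$ with $j\le d$ have last coordinate $0$. Hence this lift belongs to the slice, and deleting its last coordinate (which returns the original vector) exhibits it as a member of $C(2)$.

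The only genuinely delicate point is the bookkeeping on the last coordinate in the middle paragraph: one must argue carefully that generators with last coordinate $\ge 2$ cannot be used and that a last-coordinate-$1$ generator is used exactly once, so that every slice element decomposes as ``one distinguished generator plus an arbitrary combination of the last-coordinate-$0$ generators.'' Everything else is the transport of the monoid structure across the coordinate-deletion bijection, which is routine.
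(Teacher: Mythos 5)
Your proof is correct, and it is more self-contained than what the paper does: the paper offers no proof of this lemma at all, merely remarking that it ``can be deduced from Lemma 2 of \cite{JLMS}''. Your argument supplies exactly the missing deduction. The two key steps --- the coordinate-deletion bijection between $C(2)$ and the slice $\{c\in C(3)\mid c_{e+1}=1\}$, and the observation that an equation $\sum_j \lambda_j\alpha_{j\,e+1}=1$ in nonnegative integers forces exactly one generator with last coordinate $1$ to appear, with coefficient $1$, while all other generators used must have last coordinate $0$ (in particular the generators $\alpha_{q+1},\ldots,\alpha_t$ with last coordinate at least $2$ can never occur) --- are precisely the content one would extract from the cited external lemma, but you establish them from scratch rather than invoking the reference. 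What the citation buys the authors is brevity and a connection to the general theory of monoids defined by systems of linear Diophantine equations; what your route buys is a complete, elementary verification that depends only on the hypothesis that $A$ generates $C(3)$ and on the nonnegativity of all coordinates. Both inclusions in your argument are airtight, so nothing needs repair.
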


Note that $\mathcal{M}((n_1,\ldots,n_e),i)$ are the minimal elements of $C(1)$. Hence, the following result allows us to compute it.

\begin{proposition}\label{Proposicion26}
The minimal elements of $C(1)$ are the same that the minimal elements of the set $\{(\alpha_{d+1\,2},\ldots,\alpha_{d+1\,e}),\ldots,(\alpha_{q2},\ldots,\alpha_{qe})\}$.
\end{proposition}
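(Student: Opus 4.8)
The plan is to connect the three sets $C(1)$, $C(2)$, $C(3)$ through the coordinate projection $\pi\colon\N^{e}\to\N^{e-1}$ that deletes the first coordinate, $\pi(x_1,x_2,\ldots,x_e)=(x_2,\ldots,x_e)$, and then to transport the explicit description of $C(2)$ supplied by Lemma \ref{Lema25} down to $C(1)$, where the minimal elements we are after live. First I would verify that $C(1)=\pi(C(2))$. The inclusion $\pi(C(2))\subseteq C(1)$ is immediate from the definitions: if $(x_1,\ldots,x_e)\in C(2)$ then $n_2x_2+\cdots+n_ex_e=i+n_1x_1\equiv i\pmod{n_1}$, so $(x_2,\ldots,x_e)\in C(1)$. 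The reverse inclusion $C(1)\subseteq\pi(C(2))$ is exactly the content of Lemma \ref{Lema24}, which lifts any $(a_2,\ldots,a_e)\in C(1)$ to some $(a_1,a_2,\ldots,a_e)\in C(2)$.

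Next, since $\pi$ is additive (a monoid homomorphism), it commutes with Minkowski sums and with the operation of generating a submonoid, so applying $\pi$ to the identity $C(2)=\{\bar{\alpha}_{d+1},\ldots,\bar{\alpha}_q\}+\langle\bar{\alpha}_1,\ldots,\bar{\alpha}_d\rangle$ of Lemma \ref{Lema25} yields
$$C(1)=\{\beta_{d+1},\ldots,\beta_q\}+\langle\beta_1,\ldots,\beta_d\rangle,\qquad \beta_i:=(\alpha_{i2},\ldots,\alpha_{ie})=\pi(\bar{\alpha}_i).$$
I would also record that each $\beta_j$ with $j\le d$ is nonzero: such a $\bar{\alpha}_j$ comes from a generator of $C(3)$ with vanishing last coordinate, hence satisfies the homogeneous relation $-n_1\alpha_{j1}+n_2\alpha_{j2}+\cdots+n_e\alpha_{je}=0$, so $\beta_j=0$ would force $\alpha_{j1}=0$ and thus $\alpha_j=0$, contradicting that the $\alpha_i$ form a (minimal, hence zero-free) generating set of $C(3)$.

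Finally I would run the minimality argument. Let $c$ be a minimal element of $C(1)$ and write $c=\beta_i+\sum_{j=1}^{d}c_j\beta_j$ with $i\in\{d+1,\ldots,q\}$ and $c_j\in\N$. If some $c_j\ge 1$, then $c-\beta_j$ still lies in $C(1)$ and, because $\beta_j\neq 0$, satisfies $c-\beta_j<c$, contradicting minimality; hence all $c_j=0$ and $c=\beta_i\in F:=\{\beta_{d+1},\ldots,\beta_q\}$. This shows every minimal element of $C(1)$ lies in the finite set $F\subseteq C(1)$. A short order-theoretic step, using Dickson's Lemma to locate below any element a minimal one, then closes the proof: any minimal element of $C(1)$ lies in $F$ and is a fortiori minimal in $F$ (a strictly smaller element of $F$ would already lie in $C(1)$), and conversely any minimal element of $F$ must be minimal in $C(1)$, since an element of $C(1)$ strictly below it would dominate a minimal element of $C(1)$, which again lies in $F$. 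Hence the minimal elements of $C(1)$ coincide with those of $F$, as claimed.

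The step I expect to require the most care is the nonvanishing $\beta_j\neq 0$ for $j\le d$, because it is precisely this fact that powers the strict inequality $c-\beta_j<c$ in the last paragraph; it rests on choosing the generating set $A$ of $C(3)$ free of the zero vector and on correctly reading off the homogeneous relation satisfied by the generators whose last coordinate is $0$. Everything else is bookkeeping about the homomorphism $\pi$ and a standard well-foundedness argument for finite generation of the minimals.
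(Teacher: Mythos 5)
Your proof is correct and is essentially the paper's own argument: both hinge on Lemma \ref{Lema24} to lift elements of $C(1)$ into $C(2)$, on the structure of $C(2)$ given by Lemma \ref{Lema25}, and on projecting away the first coordinate. The only difference is organizational: the paper shows directly that each $(\alpha_{k2},\ldots,\alpha_{ke})$ with $k\in\{d+1,\ldots,q\}$ lies in $C(1)$ and that every element of $C(1)$ dominates one of them, which yields the conclusion by pure order theory and thereby sidesteps the extra observation (correct, but needed only in your formulation via the decomposition $C(1)=\{\beta_{d+1},\ldots,\beta_q\}+\langle\beta_1,\ldots,\beta_d\rangle$) that $\beta_j\neq 0$ for $j\le d$.
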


\begin{proof}
Let $k$ be in $\{d+1,\ldots,q\}$. We check if $(\alpha_{k2},\ldots,\alpha_{ke})\in C(1)$. Since $(\alpha_{k1},\ldots,\alpha_{ke},1)\in C(3)$, then $(-n_1)\alpha_{k1}+n_2\alpha_{k2}+\cdots+n_e\alpha_{ke}-i=0$. Therefore $n_2\alpha_{k2}+\cdots+n_e\alpha_{ke}\equiv i \mod n_1$ so $(\alpha_{k2},\ldots,\alpha_{ke})\in C(1)$.

We finish the proof checking that if $(a_2,\ldots,a_e)\in C(1)$ then there exists $k\in\{d+1,\ldots,q\}$ such that $(\alpha_{k2},\ldots,\alpha_{ke})\leq(a_2,\ldots,a_e)$. By Lemma \ref{Lema24}, there exists $a_1\in\N$ such that $(a_1,a_2,\ldots,a_e)\in C(2)$. Hence by Lemma \ref{Lema25}, there exists $k\in\{d+1,\ldots,q\}$ such that $(\alpha_{k1},\alpha_{k2},\ldots,\alpha_{ke})\leq(a_1,a_2,\ldots,a_e)$. Therefore, we have that $(\alpha_{k2},\ldots,\alpha_{ke})\leq(a_2,\ldots,a_e)$.
\end{proof}

A efficient algorithm for computing a finite system of generators of $C(3)$ is given in \cite{contejean}. So, applying the previous result we have an algorithm which allows us to compute the minimal elements of $C(1)$. Therefore, using Proposition \ref{Proposicion22} and the idea exposed in Example \ref{Ejemplo23}, we have an algorithm for computing easily $\Ap(T,m)$ for every $T\in[S]$. Finally, thanks to Lemma \ref{Lema20} we can compute easily $F(T)$ and $g(T)$ for every $T\in [S]$.

\section{Examples}
We devote this section to
illustrate the previous results with several examples. They show all the semigroups with a fixed multiplicity, embedding dimension, and Frobenius number or genus. Besides, we check Wilf's conjecture for many semigroups in the tree associated to $[S]$ for several packed numerical  semigroups. The computations have been done in an Intel i7 with 32 Gb of RAM, and using {\tt Mathematica}  (\cite{mathematica}).

\begin{example}\label{example_m_e_F}
In this example we compute all the numerical semigroups with multiplicity 6, embedding dimension 3, and Frobenius number equal to 23.

With these fixed conditions, the set $\mathcal{C}(m,e)$ is
\begin{multline*}
\{\langle6, 7, 8\rangle, \langle6, 7, 9\rangle, \langle6, 7, 10\rangle, \langle6, 7, 11\rangle, \langle6, 8, 9\rangle,\\ \langle6, 8,11\rangle, \langle6, 9, 10\rangle, \langle6, 9, 11\rangle, \langle6, 10, 11\rangle\}.
\end{multline*}
The Frobenius number of these semigroups are 17, 17, 15, 16, 19, 21, 23, 25 and 25, respectively. So, by Proposition \ref{Proposicion12}, for computing the semigroups with Frobenius number 23, we only consider the packed numerical  semigroups $L=\{\langle6, 7, 8\rangle, \langle6, 7, 9\rangle, \langle6, 7, 10\rangle, \langle6, 7, 11\rangle, \langle6, 8, 9\rangle, \langle6, 8,
11\rangle, \langle6, 9, 10\rangle\}.$ Applying Algortihm \ref{Algoritmo15}, we compute the elements in $G([S])$ with the fixed Frobenius number. For example, from the first packed numerical  semigroups in $L$ only one numerical semigroup with Frobenius number equal to 23 is obtained (see Figure \ref{arbol6_7_8_Frob}),
but there is no numerical semigroups with Frobenius number equal to 23 in $G[\langle6, 8, 9\rangle]$ (see Figure \ref{arbol6_8_9_Frob}).
Hence, the set of  numerical semigroups with multiplicity 6, embedding dimension 3, and Frobenius number equal to 23 is
$$\{ \langle6, 8, 13\rangle, \langle6, 7, 15\rangle, \langle6, 7, 22\rangle, \langle6, 7, 29\rangle,\langle6, 9, 10\rangle\}.$$

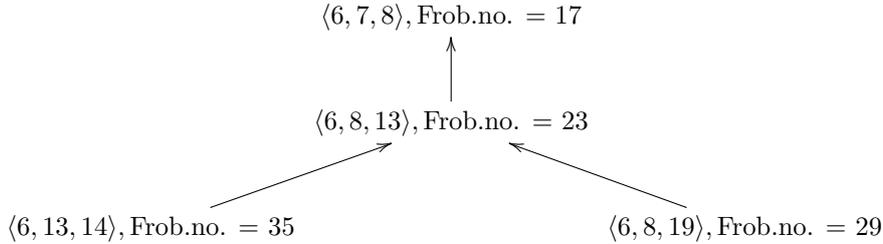
\begin{figure}[H]
\xymatrix@C=0.1em{
 & \langle 6, 7, 8\rangle, \text{Frob.no. = 17} \ar@{<-}[d]  &  \\
  & \langle 6,8,13\rangle, \text{Frob.no. = 23} \ar@{<-}[rd]\ar@{<-}[ld]  & \\
\langle 6,13,14 \rangle, \text{Frob.no. = 35} &  &  \langle 6,8,19\rangle, \text{Frob.no. = 29}
}\caption{Two levels of the tree associated to the semigroup $\langle 6, 7, 8\rangle.$}\label{arbol6_7_8_Frob}
\end{figure}

\begin{figure}[H]
\xymatrix@C=0.1em{
 & \langle 6,8,9\rangle, \text{Frob.no. = 19} \ar@{<-}[rd]\ar@{<-}[ld]  &  \\
\langle 6,9,14 \rangle, \text{Frob.no. = 31} &  &  \langle 6,8,15\rangle, \text{Frob.no. = 25}
}\caption{One level of the tree associated to the semigroup $\langle 6, 8, 9\rangle.$}\label{arbol6_8_9_Frob}
\end{figure}
\end{example}

\begin{example}
In this example, all the numerical semigroups with multiplicity 6, embedding dimension 3, and genus equal to 16 are computed. From Example \ref{example_m_e_F}, the set $\mathcal{C}(6,3)$ is
\begin{multline*}
\{\langle6, 7, 8\rangle, \langle6, 7, 9\rangle, \langle6, 7, 10\rangle, \langle6, 7, 11\rangle,\\ \langle6, 8, 9\rangle, \langle6, 8,11\rangle, \langle6, 9, 10\rangle, \langle6, 9, 11\rangle, \langle6, 10, 11\rangle\}.
\end{multline*}
The genus of these semigroups are 9, 9, 9, 10, 10, 11, 12, 13 and 13, respectively. So, by Proposition \ref{Proposicion12}, for computing the semigroups with genus 16, we apply Algorithm \ref{Algoritmo16} to all elements in $\mathcal{C}(6,3).$ For example, for the semigroups $\langle6, 7, 8\rangle$ and $\langle6, 8, 9\rangle$ we obtain the trees showed in Figures \ref{arbol6_7_8_genus} and \ref{arbol6_8_9_genus}, respectively.
Thus, the set of  numerical semigroups with multiplicity 6, embedding dimension 3, and genus 16 is
$$\{ \langle6, 14, 9\rangle, \langle6, 8, 21\rangle, \langle6, 15, 11\rangle, \langle6, 10, 17\rangle\}.$$

\begin{figure}[H]
\xymatrix@C=0.1em{
 & \langle 6, 7, 8\rangle, \text{genus = 9} \ar@{<-}[d]  &  \\
  & \langle 6,8,13\rangle, \text{genus = 12} \ar@{<-}[rd]\ar@{<-}[ld]  & \\
\langle 6,13,14 \rangle, \text{genus = 18} &  &  \langle 6,8,19\rangle, \text{genus = 15} \ar@{<-}[d] \\
 & &  \langle 6,8,25 \rangle, \text{genus = 18}
}
\caption{Three levels of the tree associated to the semigroup $\langle 6, 7, 8\rangle.$}\label{arbol6_7_8_genus}
\end{figure}
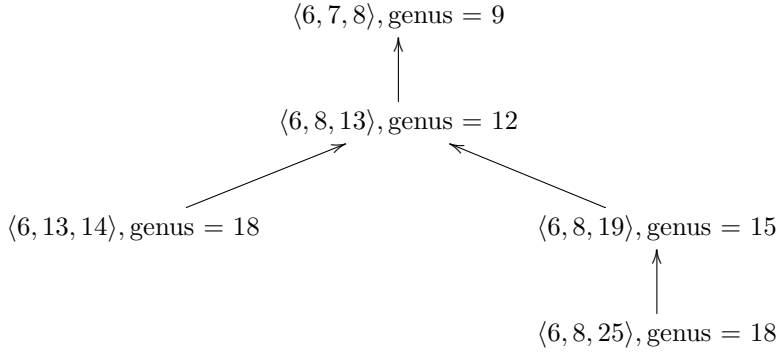
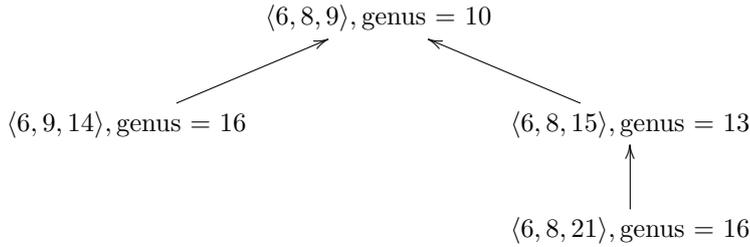
\begin{figure}[H]
\xymatrix@C=0.1em{
  & \langle 6,8,9\rangle, \text{genus = 10} \ar@{<-}[rd]\ar@{<-}[ld]  & \\
\langle 6,9,14 \rangle, \text{genus = 16} &  &  \langle 6,8,15\rangle, \text{genus = 13} \ar@{<-}[d] \\
 & &  \langle 6,8,21 \rangle, \text{genus = 16}
}
\caption{Two levels of the tree associated to the semigroup $\langle 6, 8, 9\rangle.$}\label{arbol6_8_9_genus}
\end{figure}

\end{example}

\begin{example}
Now, we check Wilf's conjecture for several elements in the tree associated to some packed numerical  semigroups. In this example, the elements are showed as a set with three entries ${{A},f,g}$ where $A$ is the minimal generating set of a numerical semigroup, and $f$ and $g$ are their Frobenius number and genus, respectively. Figure \ref{arbolwilf} illustrates two levels of the tree associated to the semigroup $S=\langle 110, 216, 217, 218, 219\rangle.$
Note that
for all its elements the inequality $\frac{e(S)}{e(S)-1}=\frac{5}{4}\leq \frac{F(S)+1}{g(S)}$ is held, and therefore they all satisfy Wilf's conjecture.

\begin{landscape}
\begin{figure}[!p]
\includegraphics[scale=.49]{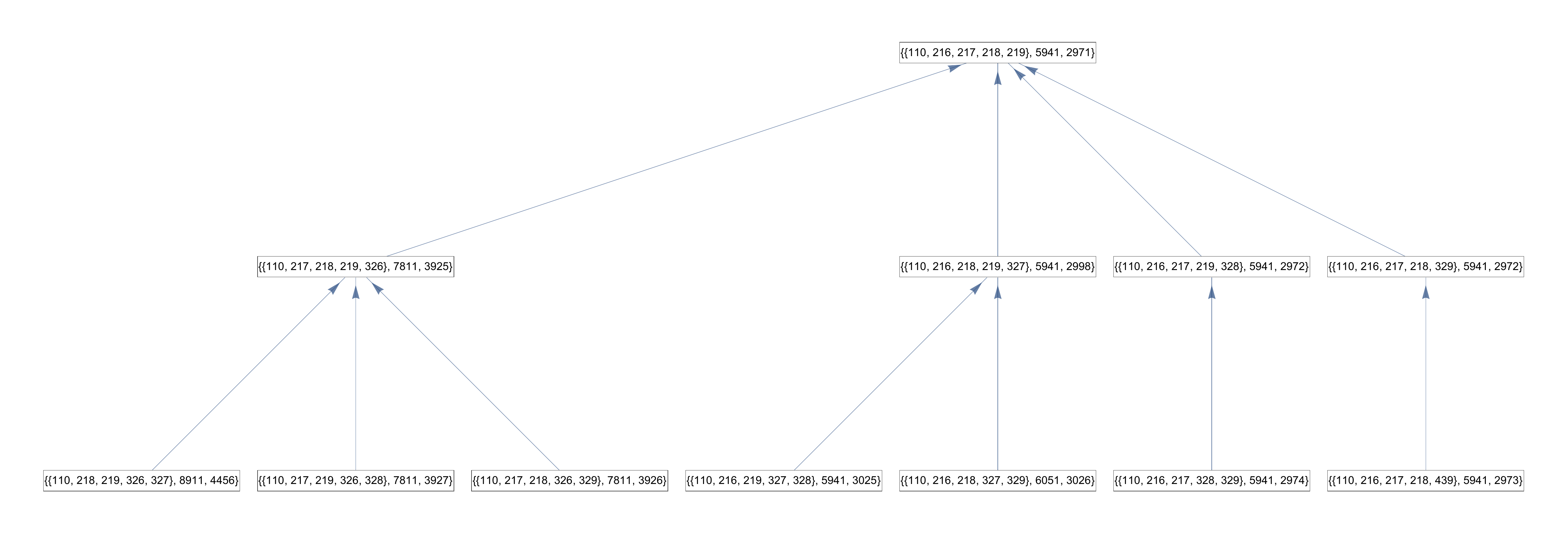}
\caption{Tree for checking Wilf's conjecture.}\label{arbolwilf}
\end{figure}
\end{landscape}

In Table \ref{tablawilf} we show some packed numerical  semigroups and the minimum and maximum of the quotients $(F(T)+1)/g(T)$ of the semigroups $T$ in their associated trees until a fixed level. Note that all tested semigroups (more than 66000) satisfy Wilf's conjecture.

\begin{table}[h]
\scriptsize
\centering
\begin{tabular}{|c|c|c|c|}
\hline
Semigroup & number & $\min \{ \frac{F(\bullet)+1}{g(\bullet)} \}$  & $\max \{ \frac{F(\bullet)+1}{g(\bullet)} \}$ \\
\hline
$\{\{ 97, 111, 142, 159, 171 \} ,958 ,525 \}$ &  3694 & ${{1496}/{981}}$ & ${{2705}/{1357} }$ \\
\hline
$\{\{ 110, 216, 217, 218, 219\}, 5941, 2971\}$ &  425 & ${{2055}/{1081}}$ & 2 \\
\hline
$\{\{ 115, 151, 172, 189, 201 \},1282 , 724\}$ &  2656 & ${{1937}/{1224} }$ & ${{670}/{339} }$ \\
\hline
$\{\{ 111, 115, 122, 171, 181, 200, 201 \}, 702 , 445 \}$ &  35735 & ${{1488}/{1027}}$ & ${{2012}/{1041}}$ \\
\hline
$\{\{ 117, 125, 142, 173, 191, 203, 213 \}, 794, 476\}$ &  28688 & ${{382}/{261}}$ & ${{899}/{458}}$ \\
\hline
\end{tabular}
\caption{Checking Wilf's conjecture (up to level $15$).}\label{tablawilf}
\end{table}

\end{example}

Packed semigroups allow us to propose new problems for numerical semigroups. For example,
minimal values of $F(S)$ and $g(S)$ for all $S\in \mathcal{L}(m,e)$ can be studied by using this kind of semigroups. It is easy to prove that these minima are reached in elements belonging to $\mathcal{C}(m,e).$ So, $\min \{F(S)\mid S\in \mathcal{L}(m,e)\}$ and $\min \{g(S)\mid S\in \mathcal{L}(m,e)\}$ can be computed from the finite sets $\{F(S)\mid S\in \mathcal{C}(m,e)\}$ and $\{g(S)\mid S\in \mathcal{C}(m,e)\}$, respectively. Another interesting problem is to compute $\max \{F(S)\mid S\in \mathcal{C}(m,e)\}.$


\begin{thebibliography}{12}

%
\bibitem{apery}
\textsc{R. Ap\'{e}ry},
\newblock \emph{Sur les branches superlin\'{e}aires des courbes alg\'{e}briques,}
\newblock   C. R. Acad. Sci. Paris 222 (1946), 1198--2000.
%
\bibitem{contejean}
\textsc{E. Contejean and H. Devie},
\newblock \emph{An efficient incremental algorithm for solving system of linear Diophantine equations,}
\newblock   Information and Computation 113 (1994), 143--172.
%
\bibitem{alfonsin}
\textsc{J. L. Ram\'{\i}rez Alfons\'{\i}n},
\newblock \emph{The Diophantine Frobenius problem,}
\newblock   Oxford Univ. Press, 2005.
%
\bibitem{finitely}
\textsc{J. C. Rosales, and P. A. Garc\'{\i}a-S\'anchez},
\newblock \emph{Finitely generated commutative monoids,}
\newblock   Nova Science Publishers, Inc. New York, 1999.
%
\bibitem{libro}
\textsc{J. C. Rosales, and P. A. Garc\'{\i}a-S\'anchez},
\newblock \emph{Numerical semigroups,}
\newblock   Developments in Mathematics, Vol 20, Springer, New York, 2009.
%
\bibitem{JLMS}
\textsc{J. C. Rosales, and P. A. Garc\'{\i}a-S\'anchez, J. I. Garc\'{\i}a-Garc\'{\i}a and M. B. Branco},
\newblock \emph{Systems of inequalities and numerical semigroups,}
\newblock   J. London Math. Soc. 65 (2002), 611--623.
%
\bibitem{selmer}
\textsc{E. S. Selmer},
\newblock \emph{On a linear Diophantine problem of Frobenius,}
\newblock   J. Reine Angew. Math 293/294 (1997), 1--17.
%
\bibitem{wilf}
\textsc{H. S. Wilf},
\newblock \emph{A circle-of-lights algorithm for the "money-changing problem",}
\newblock   Amer. Math. Monthly 85 (1978), 562--565.

\bibitem{mathematica}
\textsc{Wolfram Research, Inc.}
\newblock Mathematica, Versi\'{o}n 10.0, Champaign, IL (2014).

\end{thebibliography}
\end{document}